\documentclass[11pt]{article}
\usepackage[T2A]{fontenc}
\usepackage[utf8]{inputenc}

\usepackage[main=english,russian]{babel} 

\usepackage{dsfont}
\usepackage[all]{xy}
\usepackage{microtype}

\usepackage[colorinlistoftodos]{todonotes}
\setuptodonotes{color=blue!20, textcolor=black, linecolor=blue!60!black}

\usepackage[utf8]{inputenc}
\usepackage{amsmath, amssymb, amsthm}
\usepackage{graphicx}
\graphicspath{{images/}}
\usepackage{geometry}
\usepackage{enumitem}
\geometry{a4paper, margin=1in}
\setlength{\parindent}{0pt}
\setlength{\parskip}{0.25em}
\usepackage{nccmath, mathtools}
\usepackage{indentfirst}
\usepackage{fancyhdr}
\usepackage{url}
\usepackage{multicol}
\usepackage{float}
\usepackage[nottoc]{tocbibind}
\setlength{\parindent}{1em}

\usepackage[colorlinks,
            citecolor=blue,
            linkcolor=red,
            urlcolor=blue,
            pdfpagelabels,
            plainpages=false,
            hypertexnames=false]{hyperref}
\usepackage[capitalize]{cleveref}
\crefname{enumi}{}{}

\usepackage[labelfont=sc]{caption}

\newcommand{\thesistitle}[0]{On the integrality of some P-recursive sequences}
\theoremstyle{definition}
\newtheorem{defi}{Definition}[section]
\newtheorem*{ex}{Example}
\theoremstyle{plain}
\newtheorem{thm}[defi]{Theorem}

\newtheorem{lem}[defi]{Lemma}
\newtheorem{cor}[defi]{Corollary}
\newtheorem{prop}[defi]{Proposition}
\newtheorem{conj}[defi]{Conjecture}
\theoremstyle{remark}
\newtheorem{rem}[defi]{Remark}

\AddToHook{env/definition/begin}{\crefalias{definition}{definition}}
\AddToHook{env/exa/begin}{\crefalias{definition}{example}}
\AddToHook{env/rk/begin}{\crefalias{definition}{remark}}
\AddToHook{env/proposition/begin}{\crefalias{definition}{proposition}}
\AddToHook{env/theorem/begin}{\crefalias{definition}{theorem}}
\AddToHook{env/corollary/begin}{\crefalias{definition}{corollary}}
\AddToHook{env/conjecture/begin}{\crefalias{definition}{conjecture}}

\usepackage{titling}

\usepackage{algorithm}
\usepackage{algpseudocode}
\algrenewcommand\algorithmicrequire{\textbf{Input:}}
\algrenewcommand\algorithmicensure{\textbf{Output:}}
\algtext*{EndIf}
\algtext*{EndWhile}
\algtext*{EndFor}
\algtext*{EndProcedure}

\title{On the integrality of some P-recursive sequences}
\author{Anastasia Matveeva}

\begin{document}

\hspace{0pt}
\vfill

\begin{center}

\includegraphics[width=0.25\textwidth]{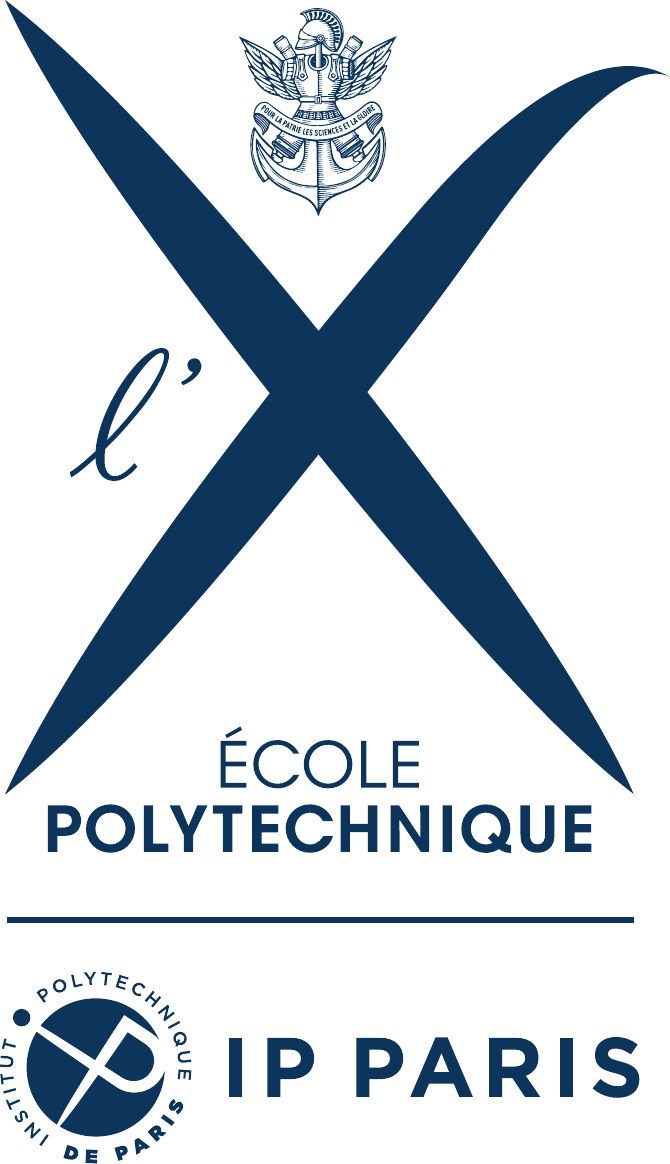}

\vspace*{2em}
{\large
\textbf{\'Ecole polytechnique}

\vspace*{1em}
\textit{LAB RESEARCH PROJECT}

\vspace*{3em}
{\Huge \textbf{On the integrality of some \\ P-recursive sequences}}
\vspace*{3em}

\textit{Student:}

\vspace*{1em}
Anastasia Matveeva, \'Ecole polytechnique

\vspace*{2em}
{\textit{Advisor:}}

\vspace*{1em}
Alin Bostan, Inria Saclay
}

\vspace*{2em}
\textit{February-May \ 2025}

\end{center}

\vfill
\hspace{0pt}
\newpage

\pagestyle{fancy}
\lhead{}
\rhead{\thesistitle}

\newpage
\section*{Acknowledgements}
I am deeply grateful to Alin Bostan for his guidance, support, and for introducing me to computer algebra in the first place. The many insightful conversations we had were truly what made this project possible.

My warm thanks to Florian Fürnsinn and Sergey Yurkevich for illuminating discussions and valuable comments, and also for making my holidays in Vienna all the more enjoyable.

Finally, I would like to thank my friends and family for listening --- with admirable patience --- to every detail of the writing process.
\newpage

\begin{abstract}
This project investigates the arithmetic nature of P-recursive sequences through the lens of their D-finite generating functions. Building on classical tools from differential algebra, we revisit the integrality criterion for Motzkin-type sequences due to Klazar and Luca, and propose a unified method for analysing global boundedness and algebraicity within a broader class of holonomic sequences. The central contribution is an algorithm that determines whether all, none, or a one-dimensional family of solutions to certain second-order recurrences are globally bounded. This approach generalizes earlier \emph{ad hoc} methods and applies successfully to several well-known sequences from the \href{https://oeis.org/}{OEIS}.
\end{abstract}

\begingroup
  \hypersetup{linkcolor=black}
  \tableofcontents
\endgroup

\newpage

\section{Introduction}

\begin{defi}\label{defi:prec}
A sequence $(u_n)_{n\in\mathbb{N}}$ of rational numbers is called \textit{P-recursive} (or \textit{P-finite}, or \textit{holonomic}) if it satisfies a linear (homogeneous) recurrence relation with coefficients in $\mathbb{Q}[n]$.
\end{defi}

\begin{defi}\label{defi:dfun}
A power series $f \in \mathbb{Q}[[x]]$ is called \textit{D-finite} (``differentially finite'', or ``holonomic'') if it satisfies a linear (homogeneous) differential equation with coefficients in $\mathbb{Q}[x]$.
\end{defi}

\begin{thm}(\cite[Theorem 1.5]{S80}) 
A power series is D-finite if and only if its coefficient sequence is P-recursive.
\end{thm}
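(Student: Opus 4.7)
The plan is to show both implications by translating between the shift operator on coefficient sequences and the Euler derivation $\theta = x\,d/dx$ on power series. The basic dictionary is that for $f(x) = \sum_{n\geq 0} u_n x^n$, multiplication by $x$ corresponds to the backward shift $u_n \mapsto u_{n-1}$ (with $u_{-1} = 0$), and $\theta$ acts diagonally with $[x^n]\,\theta f = n u_n$. More generally, $[x^n]\, x^j Q(\theta) f = Q(n-j)\, u_{n-j}$ for any $Q \in \mathbb{Q}[t]$. Both conditions can then be expressed inside the same skew polynomial algebra, and the theorem becomes a translation statement.

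For the forward direction, I would start from a differential relation $\sum_{i=0}^r p_i(x) f^{(i)}(x) = 0$ and rewrite it in the operators $x$ and $\theta$, using the classical identity $x^i\, d^i/dx^i = \theta(\theta-1)\cdots(\theta-i+1)$. After collecting powers of $x$, the equation takes the form $\sum_j x^j Q_j(\theta) f = 0$. Extracting the coefficient of $x^n$ yields $\sum_j Q_j(n-j)\, u_{n-j} = 0$, valid for all sufficiently large $n$, which is exactly a linear recurrence with polynomial coefficients in $n$, i.e., $(u_n)$ is P-recursive.

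For the reverse direction, I would start from $\sum_{i=0}^s q_i(n)\, u_{n+i} = 0$ and shift the index so as to write it as $\sum_j \tilde{q}_j(n)\, u_{n-j} = 0$ for $n \geq N_0$. Setting $L := \sum_j x^j \tilde{Q}_j(\theta)$ with $\tilde{Q}_j(\theta)$ chosen so that $[x^n] x^j \tilde{Q}_j(\theta) f = \tilde{q}_j(n)\, u_{n-j}$, the previous dictionary shows that $Lf$ has vanishing coefficients for $n\geq N_0$. Hence $Lf = P(x)$ for some polynomial $P \in \mathbb{Q}[x]$. Applying $d^{\deg P + 1}/dx^{\deg P + 1}$ kills $P$ and produces a homogeneous linear differential equation with polynomial coefficients annihilating $f$, proving $f$ is D-finite.

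The main obstacle is the careful bookkeeping in the backward direction: making sure that the recurrence, which may only hold for $n$ large, translates to an identity $Lf = P$ with $P$ a genuine polynomial (not a series), and then discarding the inhomogeneity $P$ without breaking the finite-order property. A secondary subtlety is that the $q_i(n)$ may vanish for finitely many $n$, so one must argue that finitely many initial terms can always be absorbed by the polynomial part $P(x)$. Both concerns dissolve once one commits to the explicit dictionary above and tracks the degrees of $Q_j$ and $\tilde{Q}_j$ throughout the translation.
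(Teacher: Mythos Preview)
The paper does not supply its own proof of this statement; it is quoted as a known result with the citation \cite[Theorem 1.5]{S80}. Your argument is correct and is essentially the standard one (and essentially Stanley's): the dictionary $[x^n]\,x^{j}Q(\theta)f = Q(n-j)\,u_{n-j}$ is exactly the right bridge, and both directions go through as you outline.

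Two small points are worth tightening. In the forward direction, before invoking $x^{i}\,d^{i}/dx^{i} = \theta(\theta-1)\cdots(\theta-i+1)$ you should first multiply the differential relation by $x^{r}$ (with $r$ the order), so that after rewriting each $x^{k}\partial^{i}$ as $x^{k-i}\theta^{\underline{i}}$ only non-negative powers of $x$ appear and the sum $\sum_{j} x^{j}Q_{j}(\theta)$ is finite with $j\ge 0$. In the backward direction, the explicit choice is $\tilde{Q}_{j}(t)=\tilde{q}_{j}(t+j)$, and one should remark that the operator $\partial^{\deg P+1}L$ is nonzero because the Weyl algebra $\mathbb{Q}[x]\langle\partial\rangle$ is a domain, so the resulting homogeneous equation is genuine. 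With these details spelled out the proof is complete.
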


\begin{defi}\label{defi:almint}
A power series $f(x) = \sum\limits_{n \geq 0}a_nx^n \in \mathbb{Q}[[x]]$ is called \textit{almost integral} if there exists $C \in \mathbb{Z}^*$ such that $f(Cx)-f(0) \in \mathbb{Z}[[x]]$.
Equivalently, if there exists $C \in \mathbb{Z}^* $ such that $C^n a_n \in \mathbb{Z}$ for all $n \geq 1$.
\end{defi}

\begin{defi}\label{defi:gb}
A power series $f \in \mathbb{Q}[[x]]$ is called \textit{globally bounded} if it is almost integral and has a nonzero radius of convergence in $\mathbb{C}$.
\end{defi}
P‑recursive sequences appear in many combinatorics problems, as well as in number theory and the theory of special functions. Whether a given P‑recursive sequence consists of integers only is a deceptively simple question, which is often approached with arguments tailored for particular sequences.
Our starting point is the family of Motzkin‑type sequences (defined properly in \cref{sec:Motzkin}), that is, of sequences that satisfy
\[(n+2)m_n = (2n+1)m_{n-1}+(3n-3)m_{n-2}.\]
with arbitrary initial values $m_0,m_1 \in \mathbb{Q}$. 

In \cref{sec:Motzkin}, we discuss the approach of Klazar and Luca to the integrality of Motzkin-type sequences and provide an algebraic argument for the integrality of Motzkin numbers. \cref{sec:arithmetic outlook} generalizes this to a full characterisation of algebraicity and global boundedness for the entire Motzkin-type family. In \cref{sec:alg GB}, we present an algorithm that decides these properties for an even larger family of second-order P-recursive sequences with polynomial coefficients of degree 1, under certain assumptions on the recurrence coefficients. The algorithm is then applied to several sequences from the \href{https://oeis.org/}{OEIS}, including large Schröder numbers and central trinomial coefficients. The integrality question is explored in \cref{sec:integrality}. Finally, in \cref{sec:small Apéry numbers}, we derive integrality criteria for small Apéry numbers, by adapting different approaches from the literature.

\section{Integrality analysis of Motzkin-type sequences}\label{sec:Motzkin}

A \textit{Motzkin-type sequence} $M(\mu, \lambda) = (m_n(\mu, \lambda))_n$ is defined in \cite{KL05} as a sequence of rational numbers that satisfies the recurrence:
\begin{equation} \label{eq:{1.1}} \tag{1.1}
(n+2)m_n(\mu, \lambda) = (2n+1)m_{n-1}(\mu, \lambda)+(3n-3)m_{n-2}(\mu, \lambda).
\end{equation}
with initial values $m_0(\mu, \lambda) = \mu$, $m_1(\mu, \lambda) = \lambda$. By abuse of notation, we sometimes write $m_n$ instead of $m_n(\mu, \lambda)$ when it is clear which initial values are considered.

In particular, $M(1,1)$ is the sequence of Motzkin numbers (\href{https://oeis.org/A001006}{A001006} in the OEIS). These numbers have a few interesting combinatorial interpretations, the most well-known being in terms of lattice paths. Motzkin numbers count the number of paths in the Cartesian plane from $(0,0)$ to $(n,0)$ that never dip below the horizontal axis and consist of up $(1,1)$, down $(1,-1)$, and level $(1,0)$ steps — known as \emph{Motzkin paths}. They are also closely related to Catalan numbers, which count the special case where level steps are not allowed (\emph{Dyck paths}).

In \cite{KL05} Klazar and Luca prove that $M(\mu, \lambda)$ is integral, that is, consists of integers only, if and only if $\mu, \lambda \in \mathbb{Z}$ and $\mu = \lambda$. In this section we provide a summary of their approach and discuss its limitations.

\subsection{Approach of Klazar and Luca} 
One starts by deducing the integrality of $M(\mu,\mu)=\mu M(1,1)$ for $\mu \in \mathbb{Z}$. This is nothing more than a straightforward corollary of the integrality of $M(1,1)$. If $\mu \notin\mathbb{Z}$, then $M(\mu,\mu)\notin\mathbb{Z}[[x]]$ as its first coefficient $\mu$ is already not an integer.
Regarding the case $M(\mu, \lambda)$ with $\mu \neq \lambda$, the following result holds, where $P(k)$ denotes the largest prime factor of $k \in \mathbb{Z}$ with the convention that $P(0) = P(\pm 1) = 1$.

\begin{thm}\label{thm:1.1} 
(\cite{KL05}, Theorem 1).
Let $M(\mu, \lambda) = (m_n(\mu, \lambda))_{n \geq 0} = (m_n)_{n \geq 0}$ be any Motzkin-type sequence of rational numbers with $m_0 = \mu \neq \lambda = m_1$. For any $n$ write $m_n = \frac{a_n}{b_n}$, where $a_n$ and $b_n$ are coprime integers with $b_n \geq 1$. Then $\lim \sup_{n \rightarrow \infty}$ $P(b_n) = \infty$.
\end{thm}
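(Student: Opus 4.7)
I would bypass a direct $p$-adic analysis of the recurrence and instead convert the problem, via a Casoratian (discrete Wronskian) identity, into a single clean arithmetic contradiction. Write $M_n := m_n(1,1)$ for the Motzkin numbers (integer-valued) and $N_n := m_n(0,1)$. Since the recurrence is linear of order two, its $\mathbb{Q}$-solution space is spanned by $(M_n)$ and $(N_n)$, so every Motzkin-type sequence reads $m_n(\mu,\lambda) = \mu M_n + (\lambda-\mu)N_n$. Hence for any prime $p$ larger than the finitely many prime divisors appearing in the rationals $\mu$ and $\lambda-\mu$, one has $v_p(m_n(\mu,\lambda)) = v_p(N_n)$ whenever $v_p(N_n) < 0$. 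The theorem therefore reduces to proving $\limsup_n P(b_n) = \infty$ for the single sequence $(N_n)$.

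Next I compute the Casoratian $W_n := M_n N_{n-1} - M_{n-1} N_n$. Plugging the recurrence into the expressions for $M_{n+1}$ and $N_{n+1}$ and taking the right linear combination yields
\[
W_{n+1} \;=\; -\frac{3n}{n+3}\, W_n,
\]
which, combined with $W_1 = M_1 N_0 - M_0 N_1 = -1$, telescopes to the closed form
\[
W_n \;=\; (-1)^n \cdot \frac{2 \cdot 3^n}{n(n+1)(n+2)}.
\]
The factor $n(n+1)(n+2)$ in the denominator is the arithmetic crux: it forces $W_n$ to carry any prime $p>3$ in its denominator whenever $p$ divides $n(n+1)(n+2)$.

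The contradiction is then immediate. Fix a prime $p \geq 5$ and take $n = p$: then $v_p(W_p) = -1$, since $p$ divides $p(p+1)(p+2)$ exactly once and $p \nmid 2 \cdot 3^p$. If, on the other hand, $N_0, \ldots, N_p$ were all $p$-integers, then $W_p = M_p N_{p-1} - M_{p-1} N_p$ would be a $\mathbb{Z}$-linear combination of $p$-integers and hence itself a $p$-integer, giving $v_p(W_p) \geq 0$ and contradicting the closed form. So some $N_k$ with $k \leq p$ has $p$ in its denominator. Letting $p$ range over all primes $\geq 5$ produces infinitely many distinct primes appearing in denominators of $(N_n)$; since each fixed $b_k$ has only finitely many prime divisors, these primes must spread over infinitely many indices, which gives $\limsup_n P(b_n) = \infty$.

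I do not foresee a serious obstacle here: the Casoratian identity is a short induction from the recurrence, the reduction to $(N_n)$ is linear algebra, and the final $p$-adic contradiction is one line of valuation arithmetic. The only point that deserves a sliver of care is the opening reduction, namely verifying that the finitely many primes dividing $\mu$ or $\lambda-\mu$ cannot absorb the infinitely many primes forced into denominators of $(N_n)$ — but this is automatic, since a finite set of primes cannot affect $\limsup$.
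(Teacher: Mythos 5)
Your proof is correct, and it takes a genuinely different route from the paper's. The paper reduces to one well-chosen solution, passes to the shifted generating series, solves the first-order inhomogeneous equation $gS'-\tfrac12 g'S=g$ with $g=1-2x-3x^2$ as $S=\sqrt{g}\int dx/\sqrt{g}$, and then shows by a binomial-sum analysis mod $p$ that the coefficient $d_{p-1}$ of $1/\sqrt{g}$ is not divisible by $p$, which forces $p$ into the denominator of $m_{p-2}$. You stay entirely on the recurrence side: after the same linear-algebra reduction to $N_n=m_n(0,1)$ (both arguments lean on the integrality of the Motzkin numbers $m_n(1,1)$ at this point), you compute the Casoratian $W_n=M_nN_{n-1}-M_{n-1}N_n$, whose first-order recurrence $W_{n+1}=-\frac{3n}{n+3}W_n$ with $W_1=-1$ telescopes to $(-1)^n\,\frac{2\cdot 3^n}{n(n+1)(n+2)}$, and the single valuation computation $v_p(W_p)=-1$ for $p\ge5$ forces $p$ into the denominator of $N_{p-1}$ or $N_p$. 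I verified the Casoratian recurrence, the closed form (it matches $W_1=-1$ and $W_2=\tfrac34$), the valuation step, and the handling of the finitely many primes dividing $\mu$ and $\lambda-\mu$; all are sound. Your route is shorter and more elementary --- no generating functions, no hypergeometric or binomial mod-$p$ work --- and it generalizes readily to any second-order recurrence $p_0(n)s_n=p_1(n)s_{n-1}+p_2(n)s_{n-2}$ that admits one integral solution and whose Casoratian $\prod_k p_2(k)/p_0(k)$ accumulates arbitrarily large primes in its denominator. What the paper's computation buys in exchange is slightly finer localization (the bad denominator is pinned on the specific term $m_{p-2}$, versus your ``one of $m_{p-1},m_p$'') and, more importantly for this project, a template --- the explicit $\sqrt{g}\int dx/\sqrt{g}$ representation and the mod-$p$ analysis of $1/\sqrt{g}$ --- that is reused later, e.g.\ in the proof of \cref{lem:4.4}.
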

In other words, if $\mu \neq \lambda$, then for infinitely many primes $p$ there exists $n \in \mathbb{N}$ such that $m_n(\mu, \lambda)$ has a denominator divisible by $p$. Moreover, it can be shown that for $p$ large enough it is in fact $m_{p-2}(\mu, \lambda)$ that has a denominator divisible by $p$.

Below is a concise summary of the proof of \cref{thm:1.1}. We refer to \cite{KL05} for more details.

\noindent \textit{Proof strategy for \cref{thm:1.1}}.  
It suffices to exhibit a single pair $(\alpha,\beta)$ with $\alpha \neq \beta$ such that $M(\alpha,\beta)$ contains terms whose denominators are divisible by arbitrarily large primes, since the set of solutions to \eqref{eq:{1.1}} is two-dimensional. Indeed, every sequence $M(a,b)$ can be written as a linear combination of $M(1,1)$ and $M(\alpha,\beta)$. Since $M(1,1)$ is integral, $M(a,b)$ is almost integral for $a\neq b$ if and only if $M(\alpha,\beta)$ is.

To construct an easy-to-work-with example, consider the shifted generating power series $M(x) = \sum_{n \geq 0} m_n x^{n+2}$ and complete it to a power series $S(x) = a + b x + M(x)$ that satisfies an inhomogeneous linear differential equation of order 1. With a clever choice of $a, b, m_0, m_1$, this differential equation rewrites as
\begin{equation}\label{eq:{1.2}}\tag{1.2}
gS' - \tfrac{1}{2}g'S = g, \quad \text{where } g(x) = 1 - 2x - 3x^2.
\end{equation}
This equation admits the solution
\[
S(x) = \sqrt{g(x)} \int \frac{1}{\sqrt{g(x)}}\,dx,
\]
whose coefficients $m_k$ satisfy
\[
m_{k-2} = \sum_{n=0}^{k-1} \frac{c_n d_{k-n-1}}{k-n} = \frac{d_{k-1}}{k} + \sum_{n=1}^{k-1} \frac{c_n d_{k-n-1}}{k-n}, \quad \text{for all } k \geq 2,
\]
where $c_n$ and $d_n$ are the coefficients of the power series expansions of $\sqrt{g(x)} = 1+\sum\limits_{n\geq 1}c_nx^n$ and $1/\sqrt{g(x)} = 1+\sum\limits_{n\geq1} d_nx^n$, respectively. Both $(c_n)_n$ and $(d_n)_n$ are integer sequences.

The crucial point is that $d_{p-1}$ is not divisible by $p$ for any prime $p > 3$. This is established by analysing the explicit binomial sum representation of $1+\sum\limits_{n\geq 1} d_nx^n$. As a result, $m_{p-2}$ has a denominator divisible by $p$ for large $p$, and since $p$ is arbitrary, \cref{thm:1.1} follows. \qed

\subsection{Discussion}

A natural question to ask is:

\textit{To what extent can the approach described in \cite{KL05} be generalized and applied to other P-recursive sequences?}

\noindent \textbf{Comment 1.} The first claim made by Klazar and Luca is the integrality of Motzkin-type sequences of the form $M(\mu,\mu)$, $\mu \in \mathbb{Z}$. Although the argument appears to be direct, it relies heavily on the combinatorial properties of $M(1,1)$. Hence, the first limitation: in the general case the underlying combinatorial interpretation (if any) of a given P-recursive sequence is not known beforehand. This already arises two questions: 
\begin{enumerate}
\item[1.1] How to ``guess'' that the choice $m_0 = m_1 = 1$ results in an integral solution of \eqref{eq:{1.1}}?
\item[1.2] How to prove it? 
\end{enumerate}
Question 1.1 can be approached algorithmically with the approach presented in \cref{subsection: order 2}.

One way to answer question 1.2 is to consider the generating function $y(x) = \sum\limits_{n \geq 0} m_nx^n$. Its coefficient sequence is P-recursive thanks to \eqref{eq:{1.1}}, so we know that $y(x)$ is D-finite. Let us carry out some computations in Maple\footnote{Using the \href{https://perso.ens-lyon.fr/bruno.salvy/software/the-gfun-package/}{gfun} package, version 4.10.}

\begin{verbatim}
> rec := {(n + 2)m(n) = (2n + 1)m(n - 1) + (3n - 3)m(n - 2), m(0) = 1,  m(1)= 1} :
> with(gfun):
> rectodiffeq(rec, m(n), y(x));
\end{verbatim}
\[(-3x^2 - 3x + 2)y(x) + (-3x^3 - 2x^2 + x)\left(\frac{d}{dx} y(x)\right) - 2.\]
\begin{verbatim}
> dsolve({%, y(0) = 1}, y(x));
\end{verbatim}
\[y(x) = \frac{I\sqrt{3x - 1}\sqrt{x + 1} - x + 1}{2x^2}.\]

In the procedure shown above we computed the differential equation satisfied by the generating function of Motzkin numbers with the \textit{rectodiffeq} command, and solved it using \textit{dsolve}. The initial condition $y(0) = 1$ comes from the fact that $y(0) = m_0 = 1$. Equivalently:
\[y(x) = \frac{1-x-\sqrt{1-2x-3x^2}}{2x^2}.\]
and one recognizes the closed form for the generating function of Motzkin numbers. 

It is now easy to see why all $m_n \in \mathbb{Z}$: let $\sum\limits_{n \geq 0}a_nx^n \in \mathbb{Q}[[x]]$ be the power series expansion of $\sqrt{1-2x-3x^2}$. Then
\[\left(\sum\limits_{n \geq 0}a_nx^n\right)^2 = \sum\limits_{n \geq 0} x^n \sum\limits_{k=0}^{n} a_{k}a_{n-k} = 1-2x-3x^2.\]
For all $k\in\mathbb{N}$ and $ P(x)\in \mathbb{Q}[[x]]$ let $[x^k]P(x)$ denote the coefficient of $x^k$ in the power series expansion of $P(x)$. Matching the first few coefficients gives
\begin{align*}
[x^0] \sum\limits_{n \geq 0} x^n \sum\limits_{k=0}^{n} a_{k}a_{n-k} & = a_0 = 1, \\
[x^1] \sum\limits_{n \geq 0} x^n \sum\limits_{k=0}^{n} a_{k}a_{n-k} & = a_0a_1 + a_1a_0 = -2 \implies a_1 = -1, \\
[x^2] \sum\limits_{n \geq 0} x^n \sum\limits_{k=0}^{n} a_{k}a_{n-k}  & = a_0a_2 + a_1a_1 + a_2a_0 = -3 \implies a_2 = -2.
\end{align*}
Hence \[\frac{1-x-\sqrt{1-2x-3x^2}}{2x^2} = \frac{1-x-1+x-\sum_{n \geq 2}a_nx^n}{2x^2} = \frac{1}{2}\sum\limits_{n \geq 2}a_nx^{n-2}.\] It remains to prove that $a_n$ is an even integer for $n \geq 2$.

\begin{proof} 
We proceed by induction. The statement holds for $n = 2$ as $a_2 = -2$. Assume $a_k \in 2\mathbb{Z}$ for all $k \leq N$ for some $N \geq 2$. Then 
\[0 = [x^{N+1}] \sum\limits_{n \geq 0} x^n \sum\limits_{k=0}^{n} a_{k}a_{n-k} = 2a_0a_{N+1} + 2a_1a_N + \sum\limits_{k=2}^{N-1} a_{k}a_{N+1-k} \implies a_{N+1} = a_N - \frac{1}{2} \sum\limits_{k=2}^{N-1} a_{k}a_{N+1-k}.\]
By the inductive assumption, $a_k$ is even for $2\leq k \leq N$, so $2|a_N$ and $4|a_ka_{N-k}$ for $2\leq k \leq N-1$. Hence, 
\[2\  \text{ divides }\  a_N - \frac{1}{2} \sum\limits_{k=2}^{N-1} a_{k}a_{N+1-k} = a_{N+1}.\]
\end{proof}

\begin{rem}
The above reasoning could have been made shorter by appealing to known identities for Motzkin numbers. For example, one could argue that 
\[m(x) = \frac{1-x-\sqrt{1-2x-3x^2}}{2x^2}\]
is a standard way of expressing terms of $M(1,1)$ and therefore the computation by hand (or within a computer algebra system) is redundant. However, most of such results utilize implicitly the connections with combinatorial objects and this is precisely what we are trying to avoid for the sake of generality.
\end{rem}

\noindent \textbf{Comment 2.} It is noteworthy that the proof of \cref{thm:1.1} depends significantly on the structure of the differential equation \eqref{eq:{1.2}}. As a result, even slight changes to the recurrence parameters require adjustments to the method.

In view of \cref{defi:gb}, and using the fact that $M(\mu,\mu)$ is globally bounded (because it belongs to $\mu\mathbb{Z}[[x]]$) for any $\mu \in \mathbb{Q}$, we can reformulate \cref{thm:1.1} as follows:

\begin{thm}\label{thm:1.2}
A Motzkin-type sequence $M(\mu, \lambda)$ with $\mu,\lambda \in \mathbb{Q}$ is globally bounded if and only if $\mu = \lambda$.
\end{thm}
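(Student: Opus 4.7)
The plan is to establish the biconditional by handling each direction separately, with \cref{thm:1.1} supplying the substantive content in one direction.

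For sufficiency ($\mu = \lambda$ implies $M(\mu,\mu)$ globally bounded), I would first exploit the $\mathbb{Q}$-linearity of the recurrence \eqref{eq:{1.1}} in the initial data. Since both $M(\mu,\mu)$ and $\mu \cdot M(1,1)$ satisfy the same recurrence with the same initial values $\mu,\mu$, uniqueness yields $m_n(\mu,\mu) = \mu \cdot m_n(1,1)$ for every $n$. The integrality of $M(1,1)$ has already been established earlier in this section, so writing $\mu = a/b$ in lowest terms one gets $b \cdot m_n(\mu,\mu) = a \cdot m_n(1,1) \in \mathbb{Z}$, and hence $b^n m_n(\mu,\mu) \in \mathbb{Z}$ for all $n \geq 1$. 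Combined with the positive radius of convergence of the generating function --- evident from the closed form $(1-x-\sqrt{1-2x-3x^2})/(2x^2)$ derived above, which is analytic at the origin --- this yields global boundedness.

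For necessity I would argue contrapositively: assume $\mu \neq \lambda$ and apply \cref{thm:1.1} to obtain $\limsup_{n\to\infty} P(b_n) = \infty$. It then remains to show that this precludes almost integrality. Suppose, for contradiction, that some $C \in \mathbb{Z}^*$ satisfies $C^n m_n \in \mathbb{Z}$ for every $n \geq 1$. Since $\gcd(a_n, b_n) = 1$, this forces $b_n \mid C^n$, so every prime divisor of $b_n$ divides $C$ and therefore $P(b_n) \leq P(C)$ for every $n$, a fixed finite quantity --- contradicting \cref{thm:1.1}.

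The main obstacle is really just the translation between the two equivalent ways of expressing almost integrality (a uniform $C$ clearing denominators versus boundedness of the largest prime factor of denominators); all the substantive number-theoretic work is absorbed into \cref{thm:1.1}, which is taken as given.
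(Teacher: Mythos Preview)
Your proposal is correct and matches the paper's approach: the paper simply presents \cref{thm:1.2} as a reformulation of \cref{thm:1.1}, noting in the sentence preceding it that $M(\mu,\mu)$ is globally bounded because its generating series lies in $\mu\mathbb{Z}[[x]]$. You have spelled out exactly the details the paper leaves implicit --- the linearity argument for sufficiency, the radius-of-convergence check, and the translation from ``$\limsup P(b_n)=\infty$'' to ``not almost integral'' --- so there is nothing to add.
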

Let us now invoke an important result, first stated by Eisenstein in 1852 (\cite{Eis52}) and subsequently proved by Heine (\cite{Heine53}, \cite{Heine54}):

\begin{thm}\label{thm:Eisenstein} (Eisenstein's theorem). If $f(x) \in \mathbb{Q}[[x]]$ is algebraic, then it is globally bounded.
\end{thm}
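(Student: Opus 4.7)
The plan is to establish the two defining ingredients of global boundedness separately: that $f$ has a nonzero radius of convergence in $\mathbb{C}$, and that $f$ is almost integral. Both rely on classical facts about algebraic power series.

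For the radius of convergence, I would invoke the Newton--Puiseux theorem. Since $f$ is algebraic it satisfies $P(x, f(x)) = 0$ for some irreducible $P \in \mathbb{Z}[x,y]$ with $\deg_y P \geq 1$. Newton--Puiseux guarantees that, near $x = 0$, every formal branch of $P = 0$ is in fact a \emph{convergent} Puiseux series. As $f \in \mathbb{Q}[[x]]$ involves only integer exponents, it must coincide with one such unramified branch, which therefore converges on a disc of positive radius. An equivalent route would be the implicit function theorem applied at $(0, f(0))$, after a preliminary Newton-polygon shift if $\partial_y P(0, f(0)) = 0$.

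For almost integrality, the strategy is the standard monicization-and-rescaling reduction. Writing $P(x,y) = p_d(x) y^d + \cdots + p_0(x)$ with $p_d \neq 0$, the auxiliary series $g(x) := p_d(x) f(x)$ satisfies a \emph{monic} polynomial equation in $y$ with coefficients in $\mathbb{Z}[x]$, obtained by multiplying $P(x, Y/p_d(x)) = 0$ through by $p_d(x)^{d-1}$. Since $p_d \in \mathbb{Z}[x]$ and $f = g/p_d$, almost integrality of $g$ will imply almost integrality of $f$ after enlarging the constant (the series $1/p_d$ is itself almost integral once $p_d(0) \neq 0$, and the case $p_d(0) = 0$ reduces to this after extracting the $x$-factors of $p_d$). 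The rescaled series $h(x) := g(Cx)$ again satisfies a monic equation over $\mathbb{Z}[x]$, and the target becomes to choose $C \in \mathbb{Z}_{>0}$ so that $h \in \mathbb{Z}[[x]]$, which is precisely $C^n [x^n] g \in \mathbb{Z}$.

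The heart of the argument is then the assertion: a power series $h \in \mathbb{Q}[[x]]$ satisfying a monic polynomial equation with coefficients in $\mathbb{Z}[x]$ lies in $\mathbb{Z}[[x]]$ as soon as its initial data is integral. I would argue by induction on the coefficient index: the constant term $h(0)$ is a rational root of a monic integer polynomial, hence an algebraic integer, hence an integer; and for higher coefficients, extracting $[x^n]$ from the monic equation yields, in the non-degenerate case $\partial_y P(0, h(0)) \neq 0$, an explicit solvable recurrence over $\mathbb{Z}$. The main obstacle is the \emph{ramified case} $\partial_y P(0, h(0)) = 0$: the recurrence then acquires spurious denominators at finitely many initial indices. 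These denominators divide a fixed power of the resultant $\mathrm{Res}_y(P, \partial_y P) \in \mathbb{Z}[x]$, which is nonzero by irreducibility of $P$ and separability in characteristic zero; choosing $C$ as a suitable multiple of this resultant's leading value at $x = 0$, together with the denominators of finitely many initial coefficients of $f$, absorbs them and yields the required almost-integrality constant.
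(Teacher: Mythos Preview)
The paper does not prove Eisenstein's theorem: it is simply cited as a classical result of Eisenstein (1852) and Heine (1853--54) and used as a black box thereafter. So there is no argument in the paper to compare against, and the question is whether your sketch stands on its own.

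Your architecture is the standard one, and the convergence half via Newton--Puiseux is fine. But the step you label ``the heart of the argument''---that a series $h\in\mathbb{Q}[[x]]$ satisfying a monic equation over $\mathbb{Z}[x]$ with integral initial data must lie in $\mathbb{Z}[[x]]$---is false. Take $h=(1+x)^{1/2}$: the equation $y^{2}-(1+x)=0$ is monic over $\mathbb{Z}[x]$, one has $h(0)=1\in\mathbb{Z}$ and $\partial_{y}P(0,h(0))=2\neq 0$ (so you are squarely in your ``non-degenerate'' case), yet $[x^{2}]h=-1/8$. The recurrence you extract is $D\,a_{n}=R_{n}(a_{0},\dots,a_{n-1})$ with $D=\partial_{y}P(0,h(0))$; whenever $D\neq\pm 1$, division by $D$ injects a denominator at every step, and these accumulate (here $v_{2}(a_{n})$ drops roughly like $1-2n$). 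Your proposed remedy, rescaling $x\mapsto Cx$, does not help: the rescaled series satisfies $P(Cx,y)=0$, and $\partial_{y}P(Cx,y)\big|_{x=0}=\partial_{y}P(0,y)$ is unchanged, so the same $D$ reappears. What is actually needed is a prime-by-prime argument: for $p\nmid D$ the recurrence does give $a_{n}\in\mathbb{Z}_{p}$ by induction; for the finitely many $p\mid D$ (and those dividing the resultant in the ramified case) one must separately establish a linear lower bound $v_{p}(a_{n})\geq -c_{p}n$, typically via a $p$-adic Newton-polygon or Hensel-type estimate, before a single constant $C$ can be assembled. Your final paragraph gestures at the resultant but never supplies this bound, so as written the argument does not close.
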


Using the fact that the generating function of Motzkin numbers (and, consequently, of any $M(\mu, \mu)$ by linearity) is algebraic, we obtain the following corollary:
\begin{cor}\label{cor:2.1} (of \cref{thm:1.2} and \cref{thm:Eisenstein})
The generating function of a Motzkin-type sequence $M(\mu, \lambda)$ is algebraic if and only if $\mu = \lambda$.
\end{cor}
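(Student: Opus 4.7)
The plan is to prove the corollary as a direct combination of \cref{thm:1.2} and \cref{thm:Eisenstein}, with one extra ingredient: the explicit algebraic closed form for the generating function of $M(1,1)$ obtained just above.

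For the ``if'' direction, I would suppose $\mu = \lambda$. Then the recurrence \eqref{eq:{1.1}} together with its linearity in the initial conditions gives $M(\mu,\mu) = \mu \cdot M(1,1)$, so the associated generating function is
\[
\sum_{n\geq 0} m_n(\mu,\mu)\,x^n \;=\; \mu \cdot \frac{1-x-\sqrt{1-2x-3x^2}}{2x^2}.
\]
The right-hand side is algebraic over $\mathbb{Q}(x)$ (it is expressed by a single square root of a rational function), and the class of algebraic power series is stable under multiplication by a rational scalar, hence the generating function is algebraic.

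For the ``only if'' direction, suppose the generating function $f(x)$ of $M(\mu,\lambda)$ is algebraic. By \cref{thm:Eisenstein}, $f$ is globally bounded. Then \cref{thm:1.2} forces $\mu = \lambda$.

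I do not foresee any real obstacle: the one-dimensionality of the algebraicity/global boundedness locus has already been packaged into \cref{thm:1.2}, and Eisenstein's theorem supplies the bridge from algebraicity to global boundedness. The only point that deserves a brief sanity check is that the explicit formula above is indeed algebraic, which is immediate since $1-2x-3x^2 \in \mathbb{Q}[x]$ and $f$ satisfies the quadratic equation $2x^2 f - (1-x) + \sqrt{1-2x-3x^2}=0$, i.e.\ $4x^4 f^2 - 4x^2(1-x)f + (1-x)^2 - (1-2x-3x^2) = 0$ over $\mathbb{Q}(x)$.
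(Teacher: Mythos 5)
Your proof is correct and follows essentially the same route as the paper: the ``only if'' direction is exactly Eisenstein's theorem (\cref{thm:Eisenstein}) combined with \cref{thm:1.2}, and the ``if'' direction uses the algebraicity of the Motzkin generating function $\frac{1-x-\sqrt{1-2x-3x^2}}{2x^2}$ established earlier in the section, together with linearity $M(\mu,\mu)=\mu M(1,1)$, just as the paper does.
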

\section{Arithmetic outlook}\label{sec:arithmetic outlook}

We recall that Motzkin-type sequences satisfy the recurrence
\begin{equation}\label{eq:{2.1}} \tag{2.1}
(n+2)m_n = (2n+1)m_{n-1}+(3n-3)m_{n-2}, \quad n \geq 2
\end{equation}
and are uniquely defined by a pair of initial values $m_0,m_1$. We will also write $m_n(\lambda,\mu)$ for the $n$th term of a Motzkin-type sequence with $m_0=\mu, m_1=\lambda$ when it is necessary to specify the initial values.

In what follows, we will also make use of the notion of diagonals of multivariate power series:
\begin{defi}\label{defi:diag}
Let 
\[R(x_1,\ldots,x_k) = \sum\limits_{n_1,\ldots,n_k\geq0} c(n_1,\ldots,n_k)x_1^{n_1}\ldots x_k^{n_k} \in \mathbb{Q}[[x_1,\ldots,x_k]].\]
The \textit{diagonal} of $R$ is a univariate power series defined by
\[\text{Diag}(R) = \sum\limits_{n\geq0} c(n,\ldots,n)t^n \in \mathbb{Q}[[t]].\]
\end{defi}

\begin{prop}\label{prop:3.1}
(Furstenberg, \cite{Fur67}).
Any algebraic power series is the diagonal of a rational function in two variables.
\end{prop}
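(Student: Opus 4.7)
The plan is to exhibit an explicit rational function $R \in \mathbb{Q}(x, y)$ with $\text{Diag}(R) = f$, by combining the residue theorem with the Cauchy integral representation of the diagonal of a bivariate power series.

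First, I would normalize the setup: since $f(0) \in \mathbb{Q}$ is itself trivially a diagonal and diagonals are $\mathbb{Q}$-linear, we may assume $f(0) = 0$. Let $P(x, y) \in \mathbb{Q}[x, y]$ be a minimal polynomial of $f$, so $P(0, 0) = 0$. Using squarefreeness of $P$, one tries to arrange $P_y(0, 0) \neq 0$; if this fails (a case that forces $f$ to vanish to higher order at $0$), write $f = x^k g$ with $g \in \mathbb{Q}[[x]]$, $g(0) \neq 0$, apply the construction to $g - g(0)$, and finally multiply the resulting rational function by $(xy)^k$ so that the diagonal shifts by the same factor $x^k$. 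This setup ensures $y = f(x)$ is the unique simple root of $P(x, \cdot)$ near the origin for small $x$.

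Second, the logarithmic derivative $P_y(x, y)/P(x, y)$ is then meromorphic in $y$ with a single simple pole at $y = f(x)$ of residue $1$ inside a small disk $|y| < \varepsilon$, and the residue theorem yields
\[
f(x) \;=\; \frac{1}{2\pi i}\oint_{|y|=\varepsilon}\frac{y\, P_y(x, y)}{P(x, y)}\, dy.
\]

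Third, I would propose
\[
R(x, y) \;=\; \frac{y^2\, P_y(xy,\, y)}{P(xy,\, y)}.
\]
The hypotheses $P(0, 0) = 0$ and $P_y(0, 0) \neq 0$ ensure that $P(xy, y)/y$ has a nonzero constant term $P_y(0, 0)$, so that $R \in \mathbb{Q}[[x, y]]$ is a genuine formal power series. Combining this with the Cauchy representation of the diagonal,
\[
\text{Diag}(R)(t) \;=\; \frac{1}{2\pi i}\oint_{|w|=\rho} R\!\left(w, \tfrac{t}{w}\right)\frac{dw}{w},
\]
the change of variable $w = t/y$ transforms the integrand into $\frac{y\, P_y(t, y)}{P(t, y)}$ (the two minus signs, one from $dw = -(t/y^2)\,dy$ and one from the reversed orientation of the contour, cancel), giving $\text{Diag}(R) = f$ by Step 2.

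The main obstacle lies in Step 3: one must choose the radius $\rho$ so that the $w$-contour remains inside the domain where $R$ admits its origin-centred power-series expansion, while the corresponding $y$-contour $|y| = |t|/\rho$ encloses the pole $y = f(t)$; these two requirements are compatible precisely because $f(t) = O(t)$ as $t \to 0$. Reconciling the formal power-series expansion of $R$ at $(0,0)$ with the Laurent expansion of $y P_y/P$ in the annulus around $y = f(x)$ is the real technical content. The normalization in Step 1 is comparatively routine, although the ramified case requires the auxiliary substitution $f = x^k g$ to restore the genericity hypothesis $P_y(0, 0) \neq 0$.
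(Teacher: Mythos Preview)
The paper does not prove this proposition; it is quoted from \cite{Fur67} and used later as a black box. So there is no in-paper argument to compare against. Your sketch is the classical complex-analytic proof: extract $f$ as the residue of $yP_y/P$ at the branch $y=f(x)$, then recognise that residue integral as the Cauchy-integral formula for the diagonal of $R(x,y)=y^{2}P_y(xy,y)/P(xy,y)$. The substitution and the two cancelling signs in Step~3 are handled correctly, and checking that $P(xy,y)/y$ is a unit in $\mathbb{Q}[[x,y]]$ is exactly the right way to see that $R$ expands at the origin. Note that this route is specific to subfields of $\mathbb{C}$; Furstenberg's own argument is purely formal and valid over an arbitrary field.

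There is one genuine, if standard, gap in your normalisation. The claim that $P_y(0,0)=0$ ``forces $f$ to vanish to higher order at $0$'' is false. Take $P(x,y)=y^{2}-xy-x^{3}$: it is irreducible over $\mathbb{Q}(x)$ with $P(0,0)=P_y(0,0)=0$, yet the branch
\[
f(x)=\tfrac{1}{2}\,x\bigl(1+\sqrt{1+4x}\bigr)=x+x^{2}-\cdots
\]
has a \emph{simple} zero at $0$. Consequently, writing $f=x^{k}g$ and passing to $g-g(0)$ need not produce a minimal polynomial with nonvanishing $y$-derivative at the origin, and your reduction as written does not close. The usual repair is to observe that $\mathrm{disc}_y(P)\in\mathbb{Q}[x]\setminus\{0\}$ (irreducibility of $P$), so $y=f(x)$ is a simple root of $P(x,\cdot)$ for all $x$ in a punctured neighbourhood of $0$; one then either runs the residue argument on that punctured disk directly, or first applies Hensel's lemma over $\mathbb{Q}[[x]]$ to split off the factor $y-f(x)$ from $P$ before forming $R$. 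Either fix is routine, but it is not the one you described.
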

The converse of \cref{prop:3.1}, known as Pólya's theorem \cite{Polya21}, is also true. 
\begin{prop}\label{prop:3.2}(\cite[Proposition 5, p.~49]{Chr90})
Diagonals of rational functions are globally bounded.
\end{prop}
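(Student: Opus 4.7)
The plan is to verify the two defining properties of global boundedness --- almost integrality and positive radius of convergence --- separately, both following essentially from the fact that $Q(0,0) \neq 0$ for any rational function $R = P/Q$ admitting a power series expansion at the origin.

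For almost integrality, one may rescale the numerator and denominator by a common positive integer to assume $P, Q \in \mathbb{Z}[x,y]$ without changing $R$. Set $q_0 := Q(0,0) \in \mathbb{Z} \setminus \{0\}$, and write $R = \sum c_{m,n} x^m y^n$, $P = \sum p_{m,n} x^m y^n$, $Q = \sum q_{i,j} x^i y^j$. The identity $QR = P$ read coefficient-wise gives the recurrence
\[
q_0\, c_{m,n} \;=\; p_{m,n} \;-\; \sum_{\substack{(i,j) \neq (0,0) \\ i \leq m,\, j \leq n}} q_{i,j}\, c_{m-i,\,n-j}.
\]
An induction on $m+n$ then shows $q_0^{\,m+n+1}\, c_{m,n} \in \mathbb{Z}$; specializing to $m = n$ yields $q_0^{\,2n+1}\, c_{n,n} \in \mathbb{Z}$, so $C := q_0^{\,3}$ (or any sufficiently large power of $q_0$) satisfies $C^n c_{n,n} \in \mathbb{Z}$ for every $n \geq 1$, which by \cref{defi:almint} establishes almost integrality of $\text{Diag}(R)$.

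For the radius of convergence, the condition $Q(0,0) \neq 0$ forces $R$ to be holomorphic in some bi-disk around the origin. Cauchy's integral formula on a distinguished torus inside this bi-disk furnishes constants $K, r, s > 0$ such that $|c_{m,n}| \leq K\, r^{-m}\, s^{-n}$ for all $m, n$. Restricting to the diagonal gives $|c_{n,n}| \leq K\,(rs)^{-n}$, so $\text{Diag}(R)$ has radius of convergence at least $rs > 0$, which combined with almost integrality yields the claim.

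The only delicate point, and thus the main obstacle, is the bookkeeping in the induction: one has to verify that the power of $q_0$ needed to clear the denominator of $c_{m-i, n-j}$, combined with the factor $q_{i,j}$, does not exceed $q_0^{\,m+n+1}$ in total --- this works precisely because $(i,j) \neq (0,0)$, so $i + j \geq 1$. No deeper structural feature of $R$ intervenes, which makes the proposition an especially clean arithmetic counterpart to Furstenberg's geometric \cref{prop:3.1}.
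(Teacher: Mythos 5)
Your argument is correct and is essentially the standard proof behind the cited result: the paper itself gives no proof of \cref{prop:3.2}, merely citing Christol, and the two ingredients you use (clearing denominators and the induction on $m+n$ showing $q_0^{m+n+1}c_{m,n}\in\mathbb{Z}$, plus Cauchy estimates on a polydisk for the positive radius of convergence of the diagonal) are exactly the classical ones. The only small points worth a remark are that one should briefly justify that a rational function admitting a power series expansion at the origin can indeed be written as $P/Q$ with $Q(0,0)\neq 0$, and that \cref{defi:diag} allows $k$ variables, though your bivariate argument extends verbatim and the bivariate case is all that \cref{thm:5.1} uses.
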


Set $M(x) = \sum\limits_{n\geq0} m_nx^n$. The next result describes several equivalent conditions involving the arithmetic nature of $M(x)$, and the parameters defining the sequence $(m_n)_n$.

\begin{thm}\label{thm:5.1}
The following statements are equivalent:
\begin{enumerate}
    \item $M(x)$ is algebraic.
    \item $M(x)$ is globally bounded. 
    \item There exists $b\in\mathbb{Z}^*$ such that $m_n\in\frac{1}{b}\mathbb{Z}$ for all $n \geq 0$.
    \item $m_0 = m_1$.
    \item $M(x)$ is a diagonal of some bivariate rational function.
\end{enumerate}
\end{thm}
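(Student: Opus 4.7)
The plan is to arrange the proof around condition (4), $m_0 = m_1$, which is the most elementary description of when the sequence behaves well. Two of the required equivalences already follow from earlier results in the excerpt: Theorem 1.2 gives (2) $\iff$ (4), and Corollary 2.1 gives (1) $\iff$ (4). Thus conditions (1), (2), and (4) are mutually equivalent, and the real work is to fit conditions (3) and (5) into the same equivalence class.

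For condition (5), I would close the cycle (1) $\Longrightarrow$ (5) $\Longrightarrow$ (2) using the two classical tools already cited. Furstenberg's theorem (Proposition 3.1) gives (1) $\Longrightarrow$ (5) directly: any algebraic power series is a diagonal of some bivariate rational function. Pólya's theorem (Proposition 3.2) then gives (5) $\Longrightarrow$ (2), since any diagonal of a bivariate rational function is globally bounded. Combined with the equivalences above, this places (5) in the cluster.

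For condition (3), I would prove (4) $\Longrightarrow$ (3) $\Longrightarrow$ (2). The first implication is a short linearity argument: if $m_0 = m_1 = \mu$, then by linearity of the recurrence $m_n = \mu M_n$, where $M_n$ is the $n$th (integer) Motzkin number; writing $\mu = a/b$ in lowest terms yields $m_n \in \tfrac{1}{b}\mathbb{Z}$ for all $n \geq 0$. For the converse, if the denominators of the $m_n$ are uniformly bounded by some $b$, then $b^n m_n \in \mathbb{Z}$, so $M(x)$ is almost integral. The only additional point is that $M(x)$ has positive radius of convergence, which follows from the crude estimate $|m_n| \leq 2|m_{n-1}| + 3|m_{n-2}|$ extracted directly from the recurrence, giving $|m_n| = O(K^n)$ for some $K > 0$; hence $M(x)$ is globally bounded, which is (2).

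I do not expect a serious obstacle, since the theorem is essentially a consolidation of Theorem 1.2, Corollary 2.1, and the classical theorems of Furstenberg, Pólya, and Eisenstein. The only subtlety worth isolating is that condition (3) bounds denominators but a priori says nothing about growth, which is why the elementary recurrence estimate above is needed before one can invoke the definition of global boundedness.
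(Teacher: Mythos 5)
Your proposal is correct and follows essentially the same route as the paper: it assembles the theorem from Theorem \ref{thm:1.2}, Corollary \ref{cor:2.1}, Furstenberg's and Pólya--Christol's results for statement 5, and the linearity argument with integral Motzkin numbers for $4\implies3$. The only (welcome) refinement is that you justify the positive radius of convergence in $3\implies2$ via the crude recurrence estimate, a step the paper simply labels as trivial.
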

\begin{proof} \
\begin{enumerate}
\item[$1\implies2$.] By Eisenstein's theorem.
\item[$2\implies4$.] By the contrapositive of \cref{thm:1.2}.
\item[$1\iff4$.] By \cref{cor:2.1}.
\item[$4\implies3$.] By the fact that the sequence of Motzkin numbers, i.e. $(m_n(1,1))_n$,  is integral and $m_n(\frac{a}{b},\frac{a}{b}) = \frac{a}{b} m_n(1,1)$ for all $n \geq 0$ and all $a,b \in \mathbb{Z}$.
\item[$3\implies2$.] Trivial.
\item[$1\implies5$.] By \cref{prop:3.1}.
\item[$5\implies2$.] By \cref{prop:3.2}.
\end{enumerate}
\end{proof}

The figure below visualizes the implications in \cref{thm:5.1}:
\begin{figure}[H]
\centering
\includegraphics[scale=0.35]{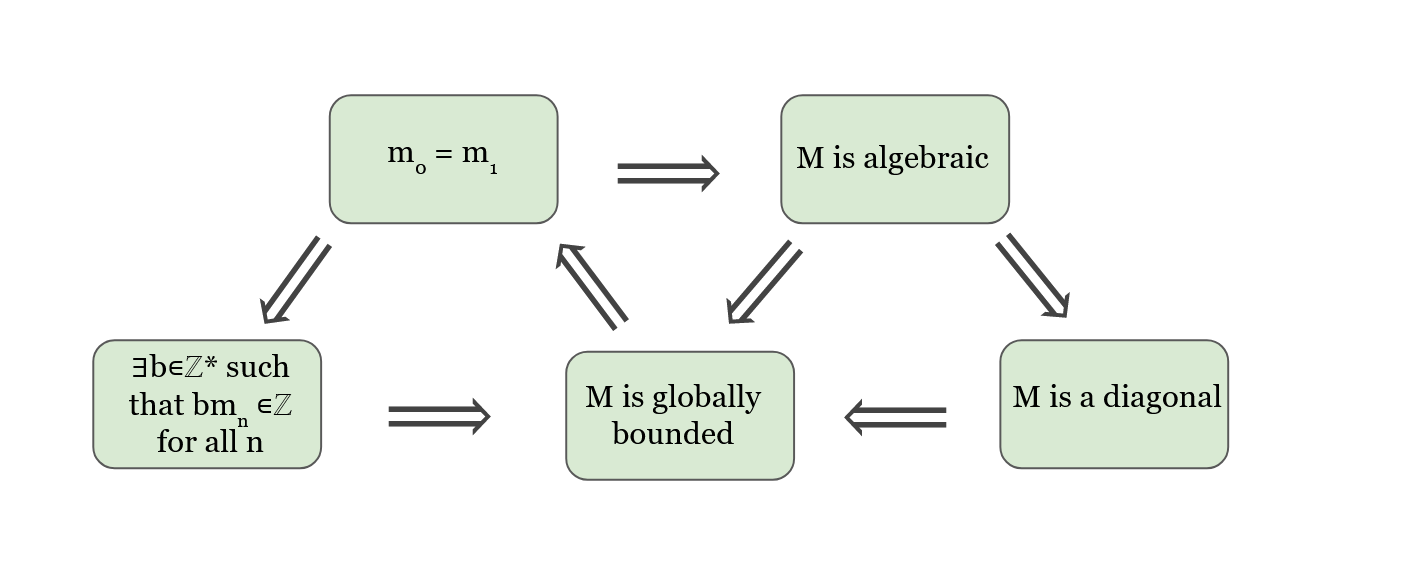}
\caption{}
\end{figure}

Such a result is surprising as it establishes, in the case of this specific recurrence, the converse of Eisenstein's theorem (the implication 1. $\implies$ 2.). Of course, this is no longer true in the general setting, and there are many examples of transcendental functions with almost integral, or even integral power series expansions. For instance, the generating function of the squared central binomial coefficients lives in $\mathbb{Z}[[x]]$, but at the same time is transcendental (\cite[p.~271--272]{Fur67}). It writes as
\[_{2}F_{1}\!\left[\!\begin{array}{c}\ \frac{1}{2} \ \frac{1}{2}\\ \ 1
\end{array}
; \, 16x
\right] = \sum\limits_{n\geq0} \binom{2n}{n}^2x^n,\]
where ${}_2F_1$ denotes the Gaussian hypergeometric function defined as 
\[_{2}F_{1}\!\left[\!\begin{array}{c}\ a \ \ b\\ \ c
\end{array}
; \, x
\right] = \sum\limits_{n=0}^{\infty} \frac{(a)_n(b)_n}{(c)_n} \frac{x^n}{n!},\]
and $(q)_n$ is the rising factorial (also known as \emph{Pochhammer symbol}):
\[
(q)_n = 
\begin{cases}
1 & \text{if } n = 0, \\
q(q+1)\ldots(q+n-1) & \text{if } n > 0.
\end{cases}
\]

There is no contradiction with the general falsity of the converse. At a closer look, the equivalence of 1. and 2. relies on an intermediate step: namely the condition $m_0=m_1$. The phenomenon is captured by the chain of implications $1 \implies 2 \implies 4$. 

Another curious observation is that the implication $2 \implies 5$ entails Christol's conjecture (\cite{Chr90}, Conjecture 4, p.~55) for P-recursive sequences that are solutions of \eqref{eq:{2.1}}.

\begin{conj}(\cite{Chr90}) Every D-finite globally bounded function is the diagonal of a rational function.
\end{conj}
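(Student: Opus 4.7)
The statement is Christol's conjecture, open since 1990, so I can only propose a program rather than a complete proof; let me describe the path I would try and point out where every attempt runs aground.

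First I would place the hypothesis in the right arithmetic setting: by a theorem of André, a D-finite power series in $\mathbb{Q}[[x]]$ that is globally bounded is in fact a G-function in Siegel's sense, and its minimal differential operator is globally nilpotent with regular singular points whose local exponents are rational. This rigidity is the common starting point of every known partial result and would be my main lever throughout the argument.

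The natural line of attack is an induction on the order $r$ of the minimal operator $L$ annihilating $f$. For $r=1$, elementary integration of a rational logarithmic derivative produces a diagonal representation directly. For $r=2$ one splits into two subcases: the algebraic case is settled by Furstenberg's \cref{prop:3.1}, and the transcendental case reduces, via the globally nilpotent constraint, to a controlled list of hypergeometric ${}_2F_1$'s, many of which have explicit diagonal realisations obtained as Hadamard products of algebraic functions. For $r\geq 3$ one would try to factor $L$ over the differential field $\mathbb{Q}(x)\langle\partial\rangle$, apply the result inductively to each factor, and then recombine the resulting diagonals --- exploiting that the diagonal operator behaves well with respect to Hadamard products, rational substitutions, and certain pushforwards.

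The main obstacle, and the reason the conjecture has withstood three decades of attack, lies precisely in this recombination step: no operator-theoretic procedure is known that, given an arbitrary globally bounded D-finite $f$, produces a bivariate rational function whose diagonal is $f$. All successful cases to date depend on \emph{ad hoc} ingredients --- closed forms of hypergeometric type, residue calculus, or combinatorial interpretations of the coefficients --- and even the algorithmic decision problem ``is this holonomic function a diagonal of a rational function?'' is open. Without a genuinely new conceptual input, for example a constructive $p$-adic analogue of Furstenberg's algebraic-to-diagonal trick, the most one can realistically hope for is a conditional statement restricted to a privileged class of operators, precisely the kind of partial result illustrated by \cref{thm:5.1} for the Motzkin-type recurrence.
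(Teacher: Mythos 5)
You have correctly identified that this statement is Christol's conjecture, which is an open problem; the paper does not prove it either --- it is stated there verbatim as a conjecture, cited from \cite{Chr90}, and the only thing the paper actually establishes is that the conjecture holds for the very restricted class of sequences solving the Motzkin-type recurrence \eqref{eq:{2.1}}, as a byproduct of the chain of implications in \cref{thm:5.1} (global boundedness forces $m_0=m_1$, hence algebraicity, hence a diagonal representation by Furstenberg's \cref{prop:3.1}). So there is no proof in the paper to compare your attempt against, and your refusal to claim a complete proof is the correct stance. Your survey of the landscape is essentially accurate: the reduction to G-functions and globally nilpotent operators, the settled order-one and algebraic cases, the hypergeometric case via Hadamard products, and the absence of any general recombination procedure for higher-order operators are all faithful descriptions of the state of the art, and your closing remark that only conditional or class-restricted results are within reach matches exactly what the paper does with \cref{thm:5.1} and \cref{conj:algiffgb}. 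The one caution is presentational rather than mathematical: a ``proof proposal'' for a conjecture should not be mistaken for progress, and had you been asked to prove the statement as stated, the only defensible answer is the one you gave --- that it cannot currently be done.
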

It is worth mentioning that \cref{thm:5.1} is not easily generalizable to other linear recurrences.
Even in the very specific case of second-order recurrences, the implications $2 \implies 1$ and $3 \implies 1$ do not hold if we drop the linearity of polynomial coefficients. As a counterexample one can take the recurrence for big Apéry numbers (\href{https://oeis.org/A005259}{OEIS A005259}):
\begin{equation}\label{eq:{2.2}}\tag{2.2}
n^3 A_n = (34n^3-51n^2+27n-5)A_{n-1} - (n-1)^3A_{n-2}.
\end{equation}
The sequence solving the recurrence above with $A_0=1, A_1=5$ appears in Apéry's famous proof of the irrationality of $\zeta(3)$ (\cite{Ap79}), and is known to be integral. Integrality can be deduced, for example, from the binomial sums representation of $(A_n)_n$:
\[A_n = \sum\limits_{k=0}^{n} \binom{n}{k}^2 \binom{n+k}{k}^2, \quad  n \in \mathbb{N}.\]

There exist several proofs of the fact that $A_0 = 1, A_1 = 5$ is the only pair of initial values (up to multiplication by an integer) that produces an integer sequence (\cite{C84}, \cite{AJ91}). However, the generating function $\sum\limits_{n\geq0}A_nx^n$ of any sequence that solves \eqref{eq:{2.2}} is transcendental (one reason for this is asymptotics incompatible with algebraicity, the so-called \emph{Flajolet criterion}).

A similar argument applies to small Apéry numbers (\href{https://oeis.org/A005258}{OEIS A005258}): the generating function of any sequence produced by the recurrence for small Apéry numbers is transcendental, while there do exist almost integral and even integral solutions of this recurrence. A detailed analysis of such cases is provided in \cref{sec:small Apéry numbers}.

\section{Bridging global boundedness and algebraicity} \label{sec:alg GB}

One possible generalization of \cref{thm:5.1} is the following one, which we state as a conjecture:

\begin{conj}\label{conj:algiffgb}
Let $(s_n)_n$ be a sequence of rational numbers that satisfies a linear homogeneous recurrence relation with polynomial coefficients of degree 1:
\[\sum\limits_{k=0}^{d}(a_kn+b_k)s_{n-k} = 0, \quad a_i \neq 0 \text{ for } i =0,\ldots,d \quad d \in \mathbb{N}.\]
Let $S(x) = \sum\limits_{n\geq0} s_nx^n$ be its generating function. The following statements are equivalent:
\begin{enumerate}
    \item $S(x)$ is algebraic.
    \item $S(x)$ is a diagonal.
    \item $S(x)$ is globally bounded.
\end{enumerate}
\end{conj}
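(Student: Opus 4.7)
The three implications (1)$\Rightarrow$(3), (1)$\Rightarrow$(2) and (2)$\Rightarrow$(3) are classical, given respectively by Eisenstein's theorem, Furstenberg's theorem, and \cref{prop:3.2}, so it suffices to prove one reverse implication; the plan is to establish (3)$\Rightarrow$(1), which closes the equivalence. The starting point is the observation that the degree-1 hypothesis forces the generating function $S(x)=\sum_{n\geq 0} s_n x^n$ to satisfy an inhomogeneous linear ODE \emph{of order one}. Multiplying the recurrence by $x^n$, summing over $n\geq 0$, and separating off the initial-value contributions yields
\[
P(x)\,S'(x) + Q(x)\,S(x) = R(x),
\]
where $P(x) = x\sum_{k=0}^d a_k x^k$ has degree $d+1$, $Q(x) = \sum_{k=0}^d(ka_k+b_k)x^k$ has degree at most $d$, and $R(x)$ is a polynomial of degree at most $d-1$ whose coefficients depend linearly on $s_0,\ldots,s_{d-1}$.

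Since $\deg Q<\deg P$, the rational function $Q/P$ decomposes as $\sum_i r_i/(x-\alpha_i)$ with no polynomial part, and variation of parameters expresses every solution as
\[
S \;=\; S_h\Big(C + \int \tfrac{R}{P\,S_h}\,dx\Big),
\qquad
S_h(x) \;=\; \prod_i (x - \alpha_i)^{-r_i}.
\]
In this form, $S$ is algebraic if and only if every residue $r_i$ is rational (so that $S_h$ is algebraic) \emph{and} the integrand $R/(P\,S_h)$ has vanishing residue at each of its poles (so that its antiderivative contains no logarithm). To prove (3)$\Rightarrow$(1), I would now assume $S$ is globally bounded. Being also D-finite, $S$ is then a G-function in the sense of Chudnovsky--André, so its minimal annihilating operator is a G-operator, whose singularities are regular with rational local exponents; translated to our first-order description, this should force $r_i\in\mathbb{Q}$ for every $i$, and hence $S_h$ to be algebraic. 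A separate argument then rules out a logarithmic summand in the particular solution: otherwise $S$ would contain a contribution of the form $S_h\cdot\log(\phi)$ with $\phi$ algebraic, whose Taylor coefficients behave like harmonic fractions $\sim 1/n$ and therefore fail to be almost integral.

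The main obstacle is precisely the step that turns global boundedness into rationality of the local exponents $r_i$: this is the point at which Christol-type statements become delicate, and where substantial G-function theory must be brought to bear. Alternatively, a more hands-on route, closer in spirit to \cref{sec:Motzkin}, would analyse the $p$-adic valuations of the Taylor coefficients of $\prod_i(x-\alpha_i)^{-r_i}$ directly and exhibit unbounded denominators whenever some $r_i$ is irrational, bypassing the G-operator machinery at the cost of careful bookkeeping at the (possibly Galois-conjugate) singularities.
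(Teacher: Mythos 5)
First, a framing point: the statement you set out to prove is stated in the paper only as a \emph{conjecture} (\cref{conj:algiffgb}). The paper records exactly the easy implications you list --- $(1)\Rightarrow(2)$ by Furstenberg (\cref{prop:3.1}), $(2)\Rightarrow(3)$ by \cref{prop:3.2}, $(1)\Rightarrow(3)$ by Eisenstein --- and proves the hard direction $(3)\Rightarrow(1)$ only for $d=1$ and for the subclass of $d=2$ with $b_2=\frac{2a_2b_1-a_1a_2b_0}{a_1}$, explicitly declaring the general case unsolved. Your reduction to a first-order inhomogeneous ODE $P S' + Q S = R$ with $P=x\sum_k a_kx^k$, and the representation $S=S_h\bigl(C+\int R/(P S_h)\,dx\bigr)$, is exactly the structural fact the paper exploits, and up to that point the proposal is sound (modulo the silent assumption that $P$ is squarefree: a repeated root of $P$ puts a factor $\exp\bigl(c/(x-\alpha)\bigr)$ into $S_h$, which needs separate treatment).

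The proposed proof of $(3)\Rightarrow(1)$, however, has genuine gaps. First, the criterion ``$S$ is algebraic iff all $r_i\in\mathbb{Q}$ and the integrand has vanishing residues (no logarithms)'' is false for the general degrees considered here: once non-integral rational exponents sit over three or more branch points, the integrand $R/(P S_h)$ lives on a curve of positive genus, and its antiderivative can be a transcendental abelian integral of the first or second kind even though no residues, hence no logarithmic terms, occur --- $\int dx/\sqrt{x^3+1}$ is the standard example, and such configurations already arise from \eqref{eq:{4.1}}-type recurrences when the exponent at $x=0$ is not an integer. Absence of logarithms is necessary, not sufficient. The paper circumvents precisely this difficulty, in the cases it settles, by André's theorem (\cref{thm:André}): a globally bounded series with algebraic derivative is algebraic, so one never decides algebraicity of the abelian integral by hand; your argument never invokes it (nor the explicit ${}_2F_1$ criteria of \cref{cor:gaussalg} used for $d=1$), and without it the residue analysis does not close. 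Second, the step converting global boundedness of $S$ into rationality of the exponents $r_i$ is only asserted (``should force'') via G-operator theory, and you flag it yourself as the main obstacle; note that even the André route needs this step first, since global boundedness must be transferred from $S$ to the primitive by dividing by $S_h$. Finally, dismissing a logarithmic summand because its coefficients behave like $1/n$ is heuristic: the logarithm appears multiplied by $S_h$ and superposed with other contributions, so one needs a monodromy argument or a $p$-adic denominator analysis in the spirit of \cref{sec:Motzkin}, not a term-by-term asymptotic. In short, the easy implications are fine, but the hard direction remains open --- as it does in the paper, which proves it only in the restricted settings described above.
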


As before, 

$1 \implies 2$ by \cref{prop:3.1}, 

$2 \implies 3$ by \cref{prop:3.2}.

The interesting direction is $3 \implies 1$. We prove it for first-order recurrences in \cref{subsection: order 1} and for a specific subclass of second-order recurrences in \cref{subsection: order 2}. The general case of \cref{conj:algiffgb} is currently unsolved.

\subsection{Hypergeometric case}\label{subsection: order 1}
The most basic setting of \cref{conj:algiffgb} is the case $d=1$. 
It defines a certain subclass of hypergeometric functions, that is, of power series whose coefficient sequences satisfy first-order linear recurrences with polynomial coefficients. Specifically, we consider $(s_n)_n$ such that 
\[(a_0n+b_0)s_n+(a_1n+b_1)s_{n-1}=0, \quad  a_0,a_1 \in \mathbb{Z}^*, \quad b_0,b_1 \in \mathbb{Z}, \quad \frac{b_0}{a_0} \not\in -\mathbb{N}^*.\]
Or, equivalently
\begin{equation}\label{eq:{3.1}}\tag{3.1}
(n+b_0)s_n+(a_1n+b_1)s_{n-1}=0, \quad a_1 \in \mathbb{Q}^*, \quad b_0 \in \mathbb{Q}\setminus -\mathbb{N}^*, \quad b_1 \in \mathbb{Q},
\end{equation}
where the condition $\frac{b_0}{a_0} \not\in -\mathbb{N}^*$ comes from the fact that if $a_0n+b_0 = 0$ for some $n \in \mathbb{N}^*$, then $s_n$ is undefined unless $s_k=0$ for all $k \in \mathbb{N}$.

Let $S(x) = \sum\limits_{n\geq0} s_nx^n$ denote the generating function of $(s_n)_n$.

The usual scheme of transforming \eqref{eq:{3.1}} into a differential equation and then solving it yields:
\begin{equation}\label{eq:{3.2}}\tag{3.2}
S(x) = x^{-b_0}(a_1x + 1)^{-1+b_0 - \frac{b_1}{a_1}} \left(s_0b_0\int x^{-1 + b_0}(a_1x + 1)^{\frac{-a_1b_0 + b_1}{a_1}}dx + c_1\right).
\end{equation}
We can clearly see that $S(x)$ is a combination of algebraic functions and a primitive of an algebraic function 
\[x^{-1 + b_0}(a_1x + 1)^{\frac{-a_1b_0 + b_1}{a_1}}.\] Fortunately, the problem of integrating algebraic functions has been extensively studied and goes back to the work of Liouville \cite[Chapter IX]{Lu12}. In our analysis we make use of the following (rather involved) theorem due to André:

\begin{thm}\label{thm:André}
(André, \cite[p.~149]{And89}).
Let $y$ be a power series. Then $y$ is algebraic if and only if $y$ is globally bounded and $\frac{dy}{dx}$ is algebraic.
\end{thm}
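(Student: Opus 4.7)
The plan is to handle the two directions of the biconditional separately. The forward implication is essentially classical: if $y$ is algebraic then $y$ is globally bounded by Eisenstein's theorem (\cref{thm:Eisenstein}), and $dy/dx$ is algebraic because differentiating a minimal polynomial relation $P(x,y)=0$ with respect to $x$ yields $y' = -P_x(x,y)/P_y(x,y)$, a rational expression in the algebraic quantities $x$ and $y$, hence algebraic.

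For the reverse direction, assume $y'$ is algebraic and $y$ is globally bounded. The strategy I would pursue combines Liouville's classical theorem on integration in finite terms with an arithmetic obstruction to the presence of logarithms. Liouville's theorem applied to the algebraic integrand $y'$ produces a representation
\[
y \;=\; g(x) \;+\; \sum_{i=1}^{r} c_i \log h_i(x),
\]
where $g$ and the $h_i$ are algebraic functions over $\overline{\mathbb{Q}}(x)$ and the $c_i$ are constants. The goal is then to force $c_1=\cdots=c_r=0$, which leaves $y=g$ algebraic.

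The core of the argument is ruling out the logarithmic summands using global boundedness. Each $\log h_i$ carries nontrivial additive monodromy around the branch points of $h_i$, unlike the purely multiplicative monodromy of an algebraic function. At the level of power series coefficients, a genuine logarithmic contribution introduces denominators growing roughly proportionally to $n$, and such denominators cannot be absorbed uniformly into a single factor $C^n$. I would make this precise by choosing a suitable singular point of the $h_i$, performing a Puiseux plus logarithmic local expansion, and comparing $p$-adic valuations of the coefficients of $y$ to the uniform bound imposed by global boundedness. The incompatibility would then force each $c_i$ to vanish.

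The main obstacle will be this last step in full generality, where the $h_i$ may have several singularities and intertwined monodromies, so that cancellations between logarithms have to be accounted for carefully before one can conclude that each $c_i=0$. A cleaner but less elementary alternative, which I believe underlies André's own treatment, is to invoke the theory of $G$-functions: global boundedness places $y$ in the $G$-function class, and the arithmetic Galois group attached to its differential module, combined with the very simple first-order inhomogeneous equation $y' = f$ with algebraic $f$, forces finite monodromy, hence algebraicity by the classical Schwarz-type criterion. I would attempt the elementary Liouville plus denominator approach first, and fall back on the $G$-function machinery if the denominator bookkeeping proves intractable.
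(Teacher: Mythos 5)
The paper does not prove this statement at all --- it is quoted directly from André's book (\cite[p.~149]{And89}) --- so your proposal is measured on its own merits rather than against an in-paper argument. Your forward direction is fine (Eisenstein plus implicit differentiation of the minimal polynomial). The reverse direction, however, has a genuine gap at its very first step: Liouville's theorem does not give you the representation $y = g(x) + \sum_i c_i \log h_i(x)$ unconditionally. It only says that \emph{if} the primitive of the algebraic function $y'$ is an elementary function, then it has that shape. Primitives of algebraic functions are typically not elementary --- abelian integrals of the first and second kind, e.g.\ $\int dx/\sqrt{1-x^4}$, are the classical examples --- and these are precisely the hard cases the theorem must handle: they involve no logarithms whatsoever, yet they are transcendental, and the whole content of André's result is that global boundedness rules them out. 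So the plan of ``write $y$ as algebraic part plus logarithms, then kill the logarithms'' cannot even get started in general; it silently assumes elementarity, which is not among the hypotheses. Even in the cases where the decomposition does exist, the denominator heuristic for excluding logarithms needs care because of possible cancellations among the $c_i\log h_i$ (rational combinations collapse to a single logarithm of an algebraic function), though that part could likely be repaired by a $p$-adic valuation argument of the Klazar--Luca type used elsewhere in the paper.

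Your fallback paragraph points at the right machinery --- this is in essence what André does, via the theory of $G$-functions --- but as written it is not a proof and is close to circular: the assertion that global boundedness together with $y'=f$ ``forces finite monodromy'' is essentially the statement to be established. The actual work consists in showing that a nonzero period of the differential $f\,dx$ (a second- or third-kind contribution, not only a logarithmic one) is incompatible with the size/growth conditions satisfied by $G$-functions, and none of that bookkeeping is sketched. In short: the elementary route fails structurally, and the $G$-function route is named but not carried out, so the reverse implication remains unproven in your proposal.
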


\begin{cor} (of \cref{thm:André} and \cref{thm:Eisenstein}).
The primitive of an algebraic power series is
globally bounded if and only if it is algebraic. 
\end{cor}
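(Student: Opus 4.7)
The corollary combines Eisenstein's theorem with André's theorem in a very direct way, so the plan is essentially to check each direction in one line and make sure I have the hypotheses lined up correctly.

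Let $f$ be an algebraic power series and let $y$ be a primitive of $f$, so that $\frac{dy}{dx}=f$ is algebraic by assumption. For the ``only if'' direction, if $y$ is algebraic, then by Eisenstein's theorem (\cref{thm:Eisenstein}) $y$ is globally bounded, so there is nothing more to do. For the ``if'' direction, suppose $y$ is globally bounded. Then we have exactly the two hypotheses of André's theorem (\cref{thm:André}): the function $y$ is globally bounded, and its derivative $\frac{dy}{dx}=f$ is algebraic. Applying \cref{thm:André} gives that $y$ itself is algebraic.

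There is no real obstacle here: the statement is essentially a repackaging of \cref{thm:André} in the special case where the algebraicity of the derivative is built into the hypothesis. The only subtlety worth flagging is that the corollary implicitly assumes the primitive is taken in $\mathbb{Q}[[x]]$ (so that the notions of global boundedness and of being a power series actually apply to $y$), which is automatic since an algebraic $f \in \mathbb{Q}[[x]]$ is in particular a formal power series whose coefficient-wise antiderivative lies in $\mathbb{Q}[[x]]$ up to a choice of constant term.
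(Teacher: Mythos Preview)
Your argument is correct and matches the paper's intent: the corollary is stated there without proof, as an immediate consequence of \cref{thm:André} and \cref{thm:Eisenstein}, and you have simply spelled out that immediate consequence. One cosmetic slip: you have the labels of the two directions swapped --- in ``globally bounded if and only if algebraic'', the \emph{if} direction is ``algebraic $\Rightarrow$ globally bounded'' (Eisenstein) and the \emph{only if} direction is ``globally bounded $\Rightarrow$ algebraic'' (Andr\'e).
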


Therefore, $S(x)$ is algebraic if and only if it is globally bounded, which immediately implies \cref{conj:algiffgb} for this case $d=1$.

The next theorem characterises the conditions under which $S(x)$ is algebraic and globally bounded, noting that these properties are equivalent by \cref{conj:algiffgb}.

\begin{thm}\label{thm:gbhyper}
Let $(s_n)_n$ be a sequence of rational numbers that satisfies \eqref{eq:{3.1}}. Then the generating function $S(x) = \sum\limits_{n\geq0} s_nx^n$ is globally bounded (and algebraic) if and only if one of the following conditions holds:
\begin{enumerate}
  \item $b_0\in\mathbb N,\ \frac{b_1}{a_1}\notin\mathbb Z$, or 
  \item $b_0\notin\mathbb Z,\ b_0-\frac{b_1}{a_1}\in-\mathbb N$, or 
  \item $b_0,b_0-\frac{b_1}{a_1}\notin\mathbb Z,\ \frac{b_1}{a_1}+1\in-\mathbb N$, or 
  \item $b_0,\frac{b_1}{a_1}\in\mathbb Z,\  \frac{b_1}{a_1}<0<b_0, \ \frac{b_1}{a_1}\le -1$, or
  \item $b_0,\frac{b_1}{a_1}\in\mathbb Z,\  0 < b_0\leq\frac{b_1}{a_1}$.
\end{enumerate}
\end{thm}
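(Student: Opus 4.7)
My plan is to use the closed form
\[
  s_n \;=\; s_0\,(-a_1)^n\, \frac{(1 + b_1/a_1)_n}{(1 + b_0)_n}\qquad (n\geq 0)
\]
obtained by iterating \eqref{eq:{3.1}}, where $(q)_n$ is the Pochhammer symbol; equivalently, writing $\beta := b_0$ and $\gamma := b_1/a_1$, one has $S(x) = s_0\cdot {}_2F_1(1+\gamma,\,1;\,1+\beta;\,-a_1 x)$. Since the equivalence ``$S$ globally bounded $\iff$ $S$ algebraic'' has already been established above via \cref{thm:André} and \cref{thm:Eisenstein}, I only need to characterise when $S$ is algebraic. From \eqref{eq:{3.2}} (assuming $s_0\neq 0$, the other case being trivial), this happens exactly when the primitive $I(x) = \int x^{\beta-1}(1+a_1 x)^{\gamma-\beta}\,dx$ is algebraic, and the affine substitution $u = 1 + a_1 x$ converts $I$, up to a nonzero constant, into the classical binomial integral $J(u) = \int u^{\gamma-\beta}(u-1)^{\beta-1}\,du$.

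At this point I invoke Chebyshev's theorem on binomial differentials: $J$ is elementary if and only if at least one of $\beta$, $\gamma$, $\gamma-\beta$ lies in $\mathbb{Z}$. In each such regime, one factor of the integrand becomes a polynomial (directly when $\beta\in\mathbb{N}^*$ or $\gamma-\beta\in\mathbb{N}$; after the substitution $v = 1 - 1/u$, which turns the integrand into $v^{\beta-1}(1-v)^{-\gamma-1}$, when $\gamma\in\mathbb{Z}$), and a finite binomial expansion reduces $J$ to a sum of integrals $\int u^k\,du$, $\int(u-1)^k\,du$, or $\int v^k\,dv$. The sum is algebraic precisely when no surviving term has exponent $k=-1$, since a single such term yields an irreducible logarithm. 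The ``if'' direction is then obtained by checking that in each of the five stated conditions the corresponding Chebyshev branch avoids the exponent $-1$: cases 3 and 4 force $(1+\gamma)_n$ to vanish for $n$ large, so $S$ is a polynomial; cases 2 and 5 collapse $(1+\gamma)_n/(1+\beta)_n$ into a polynomial in $n$, so $S$ is a rational function with only pole $-1/a_1$; case 1 exhibits $S$ explicitly via the binomial expansion of $(u-1)^{\beta-1}$, or equivalently via the Euler integral of ${}_2F_1$, while the boundary $\beta=0$ gives $S = (1+a_1 x)^{-(1+\gamma)}$ directly. Global boundedness then follows from \cref{thm:Eisenstein}.

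For the converse, any pair $(\beta,\gamma)$ outside cases 1--5 either puts all three of $\beta,\gamma,\gamma-\beta$ outside $\mathbb{Z}$ (so $J$ is non-elementary by Chebyshev, hence transcendental) or lies in a Chebyshev branch whose binomial expansion carries a genuine $k=-1$ term --- for instance, $\beta\in\mathbb{N}^*$ with $\gamma\in\{0,1,\ldots,\beta-1\}$ makes the $j=\beta-\gamma-1$ summand in the expansion of $(u-1)^{\beta-1}$ integrate to a nonzero $\log u$. Either way $J$, and hence $S$, is transcendental and fails to be globally bounded. The main obstacle I expect is the bookkeeping of this converse in the overlap regions where two of $\beta,\gamma,\gamma-\beta$ are simultaneously integer and two Chebyshev branches apply at once: one must rule out a cancellation of the logarithmic residue from one branch by a compensating contribution from the other. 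A secondary delicate point is to confirm that the algebraic prefactor $x^{-\beta}(1+a_1 x)^{\beta-\gamma-1}$ in \eqref{eq:{3.2}} neither creates nor destroys algebraicity of the product, and that the edge case $\beta = 0$ (where the integral term in \eqref{eq:{3.2}} degenerates) is absorbed consistently without a separate argument.
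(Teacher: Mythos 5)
Your route is genuinely different from the paper's. The paper rewrites $S(x)$ from \eqref{eq:{3.2}} as an algebraic prefactor times the Gaussian series ${}_2F_1\bigl[b_0,\ b_0-\tfrac{b_1}{a_1};\ 1+b_0;\ -a_1x\bigr]$ and then simply instantiates the Fürnsinn--Yurkevich criterion (\cref{cor:gaussalg}) with $(\alpha,\beta,k)=(b_0,\,b_0-\tfrac{b_1}{a_1},\,1)$, checking that one branch of that criterion is vacuous under $b_0\notin-\mathbb{N}^*$. You instead reduce to the binomial integral $J(u)=\int u^{\gamma-\beta}(u-1)^{\beta-1}\,du$ (with $\beta=b_0$, $\gamma=b_1/a_1$) and redo the classification by hand: Chebyshev's elementarity theorem when none of $\beta,\gamma,\gamma-\beta$ is an integer, and explicit detection of logarithmic terms otherwise. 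Your ``if'' direction is sound (cases 3--4 terminate the series, cases 2 and 5 give rational functions, case 1 follows from the finite expansion of $(u-1)^{\beta-1}$ with no exponent $-1$), and the generic part of the converse (all three parameters non-integer, hence $J$ non-elementary, hence non-algebraic, hence not globally bounded via \cref{thm:André}) is also fine.

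The genuine gap is in the remaining converse cases: your finite-expansion mechanism only works when the relevant exponent is a nonnegative integer, and two transcendental regimes lying outside conditions 1--5 escape it. First, $\gamma\in\mathbb{N}$ with $\beta\notin\mathbb{Z}$: your substitution $v=1-1/u$ produces the integrand $v^{\beta-1}(1-v)^{-\gamma-1}$, and since $-\gamma-1<0$ neither factor is a polynomial, so there is no finite expansion to inspect; Chebyshev only says the integral is elementary, which settles nothing about algebraicity. Second, $\gamma-\beta\in-\mathbb{N}^*$ with $\beta\notin\mathbb{Z}$ (so $\gamma\notin\mathbb{Z}$): here $J=\int(u-1)^{\beta-1}u^{-m}\,du$ with $m\geq1$, again with no polynomial factor. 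In both regimes the theorem asserts transcendence, and proving it needs an extra idea your plan does not contain --- for instance a residue/local-monodromy argument (the integrand has a nonzero residue at $u=\infty$, respectively at $u=0$, forcing a logarithmic branch point that no algebraic function can have), or an integration-by-parts reduction to the exponent $-1$; the paper avoids all of this by outsourcing the case analysis to \cref{cor:gaussalg}. Conversely, the obstacle you single out --- cancellation between two Chebyshev branches in overlap regions --- does not arise: if two of $\beta,\gamma,\gamma-\beta$ are integers then all three are, and the single finite expansion of $(u-1)^{\beta-1}$ with $\beta\in\mathbb{N}^*$ is exact, so your computation of the $j=\beta-\gamma-1$ coefficient already decides that branch. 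Finally, your own closed form gives $S=s_0(1+a_1x)^{-1-\gamma}$ when $b_0=0$, which is algebraic for \emph{every} $\gamma$; if you push that boundary case through honestly it is not ``absorbed'' but actually clashes with the stated condition list when $\gamma\in\mathbb{Z}$ (conditions 4--5 require $b_0>0$), so it requires separate treatment rather than deferral.
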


\begin{proof}
$S(x)$ can be expressed from \eqref{eq:{3.2}} as
\begin{equation}\label{eq:{3.3}}\tag{3.3}
S(x) = x^{-b_0}(a_1x + 1)^{-1+b_0 - \frac{b_1}{a_1}} \left(s_0x^{b_0} {}_{2}F_{1}\!\left[\!\begin{array}{c} b_0 \quad b_0-\frac{b_1}{a_1}\\1+b_0
\end{array}
; \, -a_1x
\right] + c_1\right).
\end{equation}

Therefore, $S(x)$ is algebraic precisely when ${}_{2}F_{1}\!\left[\!\begin{array}{c} b_0 \quad b_0-\frac{b_1}{a_1}\\1+b_0
\end{array}
; \, -a_1x
\right]$ is.

An algorithm to determine the algebraicity of hypergeometric functions with arbitrary parameters is described by Fürnsinn and Yurkevich in \cite{FY24}. In the same paper, a useful criterion for deciding the algebraicity of Gaussian hypergeometric functions is presented:

\begin{cor}\label{cor:gaussalg}
(\cite{FY24}, Corollary 3.7) \\
The Gaussian hypergeometric function ${}_{2}F_{1}\!\bigl([\alpha,\beta],[\,\alpha+k];x\bigr)$ for $k\in\mathbb Z$,
is algebraic if and only if either $k\le 0$ or:
\begin{multicols}{2}
\begin{enumerate}
  \item $\alpha\in\mathbb Z,\ \beta\notin\mathbb Z$, or 
  \item $\alpha\notin\mathbb Z,\ \beta\in-\mathbb N$, or 
  \item $\alpha,\beta\notin\mathbb Z,\ \beta-\alpha-k\in\mathbb N$, or 
  \item $\alpha,\beta\in\mathbb Z,\ \alpha<\beta\le 0,\ \beta-\alpha\ge k$, or 
  \item $\alpha,\beta\in\mathbb Z,\ 0<\alpha<\beta,\ \beta-\alpha\ge k$, or
  \item $\alpha,\beta\in\mathbb Z,\ \beta\le 0<\alpha$.
\end{enumerate}
\end{multicols}
\end{cor}

In our case of ${}_{2}F_{1}\left[\!\begin{array}{c} b_0 \quad b_0-\frac{b_1}{a_1}\\1+b_0
\end{array}
; \, -a_1x
\right]$, the values are $\alpha = b_0, \beta = b_0-\frac{b_1}{a_1}, k = 1$. By substituting them into \cref{cor:gaussalg}, we obtain the necessary and sufficient conditions for the algebraicity of $S(x)$, captured in \cref{thm:gbhyper}.

Observe that case 4. in \cref{cor:gaussalg} is not possible for the choice $\alpha = b_0, \beta = b_0-\frac{b_1}{a_1}, k = 1$.  Indeed, the three requirements in that line become
\[b_0,\frac{b_1}{a_1}\in\mathbb Z,\quad  b_0<b_0-\frac{b_1}{a_1}\le0, \quad -\frac{b_1}{a_1}\ge 1.\]
which simplify to
\[b_0,\frac{b_1}{a_1}\in\mathbb Z,\  b_0\le \frac{b_1}{a_1}\le -1.\]
This forces $b_0$ to be a nonpositive integer, contradicting our assumption $b_0 \in \mathbb{Q}\setminus -\mathbb{N}^*$. Hence case 4. is ruled out for the parameters that occur in \cref{thm:gbhyper}.
\end{proof}

\subsection{Second-order recurrences}\label{subsection: order 2}
Let $(s_n)_n$ be a P-recursive sequence of rational numbers that satisfies a second-order recurrence relation:
\begin{equation} \label{eq:{4.1}}\tag{4.1}
(n+b_0)s_n+(a_1n+b_1)s_{n-1} + (a_2n+b_2)s_{n-2}=0
\end{equation}
with $a_1,a_2 \in \mathbb{Q}^*, \quad b_0,b_1,b_2 \in \mathbb{Q}, \quad b_0 \not\in -\mathbb{N} \setminus \{0,-1\}$. \\ 
Let $S(x) = \sum\limits_{n \geq 0} s_nx^n$ be the generating function of $(s_n)_n$.
In the general case, $S(x)$ takes the following form:
\begin{verbatim}
> rec := {(n + b_0)*s(n) + (a_1*n + b_1)*s(n-1) + (a_2*n + b_2)*s(n-2), 
         s(0) = s_0, s(1) = s_1}:
> dsolve(rectodiffeq(rec, s(n), S(x)), S(x));
\end{verbatim}
\begin{align*}
S(x) &= (a_2 x^2 + a_1 x + 1)^{-\frac{b_2}{2 a_2} - 1 + \frac{b_0}{2}} 
\Bigg(\int 
\left( a_1 s_0 x + b_0 s_1 x + b_1 s_0 x + b_0 s_0 + s_1 x \right) 
(a_2 x^2 + a_1 x + 1)^{-\frac{a_2b_0-b_2}{2a_2}}x^{b_0-1} \\
&\quad e^{\frac{\operatorname{artanh} \left( \frac{2 a_2 x + a_1}{\sqrt{a_1^2 - 4 a_2}} \right)
\left(a_1a_2b_0 + a_1 b_2 - 2 a_2 b_1 \right)}{
\sqrt{a_1^2 - 4 a_2 }a_2}} dx 
+ c_1 \Bigg)x^{-b_0}e^{-\frac{\operatorname{artanh} \left( \frac{2 a_2 x + a_1}{\sqrt{a_1^2 - 4 a_2}} \right)
\left(a_1a_2b_0 + a_1 b_2 - 2 a_2 b_1 \right)}{
\sqrt{a_1^2 - 4 a_2 }a_2}},
\end{align*}
where $c_1$ is some constant that comes from solving a differential equation with unspecified initial conditions. Supposedly, if $e$ does not vanish in the equation above, $S(x)$ is not globally bounded. However, this question remains open for now.

\begin{center}
\textbf{From now on we shall restrict ourselves to the case $\mathbf{b_2 = \frac{2a_2b_1-a_1a_2b_0}{a_1}}$, setting the exponent of e to zero.}
\end{center}
\begin{verbatim}
> simplify(subs(b_2 = (2*a_2*b_1 - a_1*a_2*b_0)/a_1, %));
\end{verbatim}
\begin{align*}
S(x) &= (a_2 x^2 + a_1 x + 1)^{-\frac{(b_0-1)a_1-b_1}{a_1}} \\&
\Bigg(\int x^{b_0}(a_2 x^2 + a_1 x + 1)^{-b_0+\frac{b_1}{a_1}}
\left( a_1 s_0 + b_0 s_1 + b_1 s_0 + \frac{b_0 s_0}{x} + s_1 \right)  dx 
+ c_1 \Bigg)x^{-b_0}.
\end{align*}

Since the integrand in the code snippet above is algebraic, André’s theorem ensures that global boundedness and algebraicity of $S(x)$ are equivalent in this context. Hence, \cref{conj:algiffgb} is proved for the case $b_2 = \frac{2a_2b_1-a_1a_2b_0}{a_1}$. 
\begin{center}
\textbf{In light of this result, we treat algebraicity and global boundedness of ${S(x)}$ as interchangeable throughout this section.}
\end{center}

Given a recurrence of type \eqref{eq:{4.1}}, a sequence $(s_n)_n$ is uniquely defined by a pair $(s_0,s_1)$. The algebraic nature of $S(x)$ falls therefore into one of three cases:
\begin{enumerate}[label=\textbf{(C\arabic*)}, ref=C\arabic*]
  \item \label{case:alg} $S(x)$ is algebraic for all $(s_0,s_1) \in \mathbb{Q}^2$.
  \item \label{case:trans} $S(x)$ is transcendental for all $(s_0,s_1) \in \mathbb{Q}^2 \setminus \{(0,0)\}$.
  \item \label{case:line} The set of pairs $(s_0,s_1)$ such that $S(x)$ is algebraic forms a one-dimensional subspace of~$\mathbb{Q}^2$.
\end{enumerate}
We now present an algorithm that determines which of the three cases holds for a given recurrence of type \eqref{eq:{4.1}} with 
\[b_2 = \frac{2a_2b_1-a_1a_2b_0}{a_1}, \; b_0 \in \mathbb{N}.\]
If $S(x)$ is algebraic for all initial conditions or for none (\cref{case:alg} and \cref{case:trans}), the algorithm will detect this directly. In \cref{case:line}, where algebraicity occurs only along a one-dimensional subspace of $\mathbb{Q}^2$, the algorithm computes a nonzero rational pair $(s_0,s_1)$ that generates all such algebraic solutions, up to scalar multiplication.
Our approach involves a case-by-case analysis based on the recurrence structure. 

Firstly, it might happen that $a_1^2-4a_2 = 0$. That is, $1+a_1x+a_2x^2$ has a double root $a = -\frac{a_1}{2a_2}$. This case is simple\footnote{Assuming your institution provides a Mathematica license.}. One computes:
\begin{align*}
S(x) &= x^{-b_0} (1 + a x)^{2(b_0 - 1) - \frac{b_1}{a}} \bigg( 
  c_1 + x^{b_0} \bigg( 
    s_0 (1 + a x)^{1 - 2b_0 + \frac{b_1}{a}} \\
    &\quad + (a s_0 + s_1) \, {}_2F_1\left(
      [1 + b_0,\;
      2b_0 - \frac{b_1}{a}],\;
      [2 + b_0];\;
      -a x
    \right) 
  \bigg) 
\bigg).
\end{align*}
It is immediate that $S(x)$ has a nontrivial algebraic solution when $as_0+s_1=0 \ (\iff -\frac{a_1}{2a_2}s_0+s_1=0 \iff a_1s_0=2a_2s_1)$. For example, one can take $(s_0,s_1) = (2a_2, a_1)$. Moreover, by \cref{cor:gaussalg},
$S(x)$ is algebraic for all $(s_0,s_1) \in \mathbb{Q}^2$ if and only if:
\begin{enumerate}
  \item $b_0\in\mathbb Z,\ \frac{b_1}{a}\notin\mathbb Z$, or 
  \item $b_0\notin\mathbb Z,\ 2b_0-\frac{b_1}{a}\in-\mathbb N$, or 
  \item $b_0,2b_0-\frac{b_1}{a}\notin\mathbb Z,\ b_0-\frac{b_1}{a}-2\in\mathbb N$, or 
  \item $b_0,\frac{b_1}{a}\in\mathbb Z,\ b_0>-1,\ b_0-\frac{b_1}{a}\ge 2$, or
  \item $b_0,\frac{b_1}{a}\in\mathbb Z,\  b_0>-1, \ 2b_0-\frac{b_1}{a}\le 0$.
\end{enumerate}
If we insist that $b_0\in\mathbb{N}$ and substitute back $a = -\frac{a_1}{2a_2}$, the list above becomes
\begin{enumerate}
  \item $\frac{2a_2b_1}{a_1}\notin\mathbb Z$, or 
  \item $\frac{2a_2b_1}{a_1}\in\mathbb Z,\ b_0+\frac{2a_2b_1}{a_1}\ge 2$, or
  \item $\frac{2a_2b_1}{a_1}\in\mathbb Z,\ \ b_0 +\frac{a_2b_1}{a_1} \le 0$.
\end{enumerate}

We now turn to the case $a_1^2-4a_2 \neq 0$ and study the integral
\begin{equation}\label{eq:{4.2}}\tag{4.2}
\int{\left(x^{b_0-1}b_0s_0 + x^{b_0}(a_1s_0+b_0s_1+b_1s_0+s_1)\right)(1+a_1x+a_2x^2)^{-b_0+\frac{b_1}{a_1}}dx},
\end{equation}
and ask when it is algebraic. 

\begin{center}
\textbf{From now on we also assume $b_0\in\mathbb{N}^*$, leaving the general case for future work.} 
\end{center}

The key idea is to represent \eqref{eq:{4.2}} as a linear combination of two integrals of the type 
\begin{equation}\label{eq:{4.3}}\tag{4.3}
\int{x^n (1+a_1x+a_2x^2)^q dx}, \quad a_1,a_2 \in \mathbb{Q}^*, \ n \in \mathbb{N}, \ q \in \mathbb{Q}.
\end{equation}

Indeed, let \[I_1 = \int{x^{b_0-1}(1+a_1x+a_2x^2)^{-b_0+\frac{b_1}{a_1}}dx}, \quad I_2 = \int{x^{b_0}(1+a_1x+a_2x^2)^{-b_0+\frac{b_1}{a_1}}dx}.\]
Then
\[\int{\left(x^{b_0}(a_1s_0+b_0s_1+b_1s_0+s_1) + x^{b_0-1}b_0s_0\right)(1+a_1x+a_2x^2)^{-b_0+\frac{b_1}{a_1}}dx} = b_0s_0 I_1 + (a_1s_0+b_0s_1+b_1s_0+s_1)I_2.\]
Observe that
\[b_0s_0=0 \iff s_0 = 0 \ \text{ since } b_0\ge1\]
\[a_1s_0+b_0s_1+b_1s_0+s_1 = 0 \iff (b_0+1)s_1 = -s_0(a_1+b_1).\]

The algorithm proceeds as follows:
\begin{enumerate}
\item Decide algebraicity of \( I_1 \) and \( I_2 \).
\item Proceed according to the table below:
\end{enumerate}
\begin{center}
\renewcommand{\arraystretch}{1.5}
\begin{tabular}{|c|c|p{8.5cm}|}
\hline
\textbf{\( I_1 \)} & \textbf{\( I_2 \)} & \textbf{Conclusion} \\
\hline
Algebraic & Algebraic & \cref{case:alg}. \\
\hline
Algebraic & Transcendental & \cref{case:line}, return $(b_0+1,-a_1-b_1)$. \\
\hline
Transcendental & Algebraic & \cref{case:line}, return $(0,1)$. \\
\hline
Transcendental & Transcendental & Check whether the combination
\[
b_0s_0 I_1 + (a_1s_0 + b_0s_1 + b_1s_0 + s_1)I_2
\]
is algebraic for some \( (s_0, s_1) \in \mathbb{Q}^2 \setminus  \{(0,0)\} \).  
If so, then \cref{case:line}, return $(s_0,s_1)$. Otherwise, \cref{case:trans}. \\
\hline
\end{tabular}
\end{center}

\subsubsection{Deciding algebraicity of $I_1,I_2$.}
\begin{figure}[H]
\centering
\includegraphics[scale=0.5]{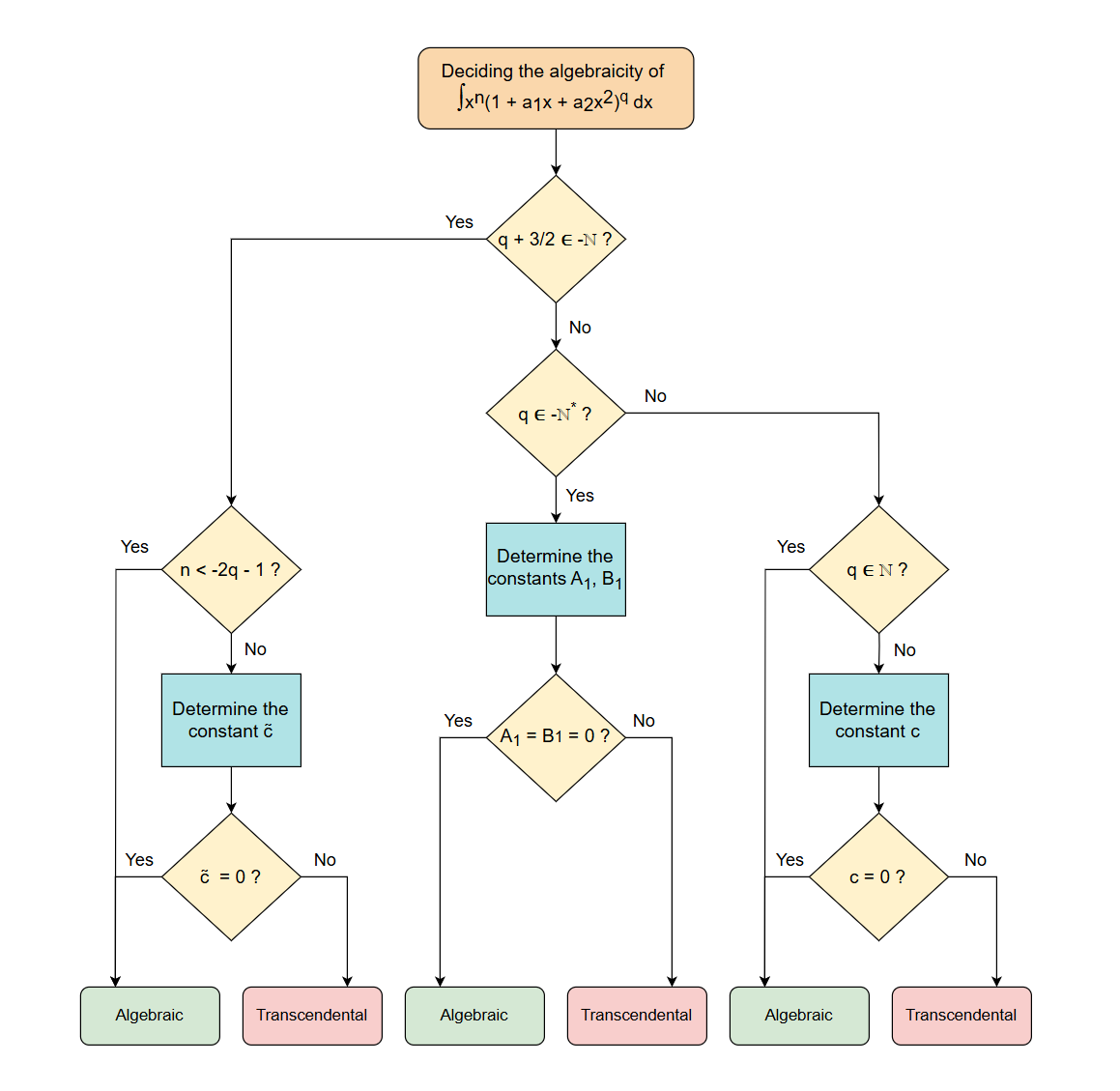}
\caption{Deciding whether $\int{x^n (1+a_1x+a_2x^2)^q dx}$, such that $a_1,a_2 \in \mathbb{Q}^*, n \in \mathbb{N}, q \in \mathbb{Q}$, is algebraic or transcendental. The constants $c, \tilde{c}, (A_1, B_1)$ come from \eqref{eq:{4.4}}, \eqref{eq:{4.7}}, \eqref{eq:{4.8}}, respectively.}
\label{fig:alg}
\end{figure}
Consider an integral of the type \eqref{eq:{4.3}}. There are several cases:
\begin{enumerate}
\item $\mathbf{n = 0}$. \\
In order to study the integral
\[\int{(1+a_1x+a_2x^2)^qdx}.\]
we distinguish 2 subcases:
\begin{itemize}
\item[1.2] Assume $\mathbf{a_1^2-4a_2>0}$. 

If $q=-1,$
\[\int{(1+a_1x+a_2x^2)^{-1} dx} = -\frac{2\operatorname{artanh} \left
(\frac{2a_2x + a_1}{\sqrt{a_1^2 - 4a_2}}\right)}{\sqrt{a_1^2 - 4a_2}},\]
which is transcendental.
Otherwise,
\begin{align*}
\int{(1+a_1x+a_2x^2)^q dx} = 
\frac{1}{a_2(q+1)}\,
2^{q-1}
\bigl(2a_2x+a_1-\sqrt{a_1^2-4a_2}\bigr)\,
\left(
   \frac{2a_2x+a_1+\sqrt{a_1^{2}-4a_2}}{\sqrt{a_1^{2}-4a_2}}
\right)^{-q}&\\
\bigl(a_2x^{2} + a_1x + 1\bigr)^q
{}_2F_{1}\!\left([
      -q,\;
     q+1],\;
      [q+2];\;
      \frac{-a_1-2a_2x+\sqrt{a_1^{2}-4a_2}}{2\sqrt{a_1^{2}-4a_2}}
\right)
\;+\;C.
\end{align*}
In other words, we get a particular $_2F_1$ multiplied by an algebraic function. Therefore, the answer to ``Is $\int{(1+ax+bx^2)^{q}dx}$ algebraic?'' depends solely on this $_2F_1$.

\cref{cor:gaussalg} allows to establish all cases when ${}_2F_{1}\!\left([
      -q,\;
     q+1],\;
      [q+2];\;
      \frac{-a_1-2a_2x+\sqrt{a_1^{2}-4a_2}}{2\sqrt{a_1^{2}-4a_2}}
\right)$  is algebraic:
\begin{enumerate}
\item $q \in \mathbb{N}$, or
\item $q + \frac{3}{2} \in -\mathbb{N}$.
\end{enumerate}

\item[1.2] Assume $\mathbf{a_1^2-4a_2<0}$.

If $q=-1,$
\[\int{(1+a_1x+a_2x^2)^{-1} dx} = \frac{2\operatorname{artanh}\left
(\frac{2a_2x + a_1}{\sqrt{-a_1^2 + 4a_2}}\right)}{\sqrt{-a_1^2 + 4a_2}}.\]
which is transcendental.
Otherwise, the integral admits a solution in terms of algebraic functions and a Gaussian $_2F_1$:
\begin{align*}
\int{(1+a_1x+a_2x^2)^q dx} = 
\left(x+\frac{a_1}{2a_2}\right)\left(\frac{4a_2-a_1^2}{4a_2}\right)^q\,
{}_2F_{1}\!\left(\left[
      \frac{1}{2},\;
     -q\right],\;
      \left[\frac{3}{2}\right];\;
      \frac{(a_1+4a_2x)^2}{a_1^2-4a_2}
\right)
\;+\;C.
\end{align*}
which is algebraic if and only if:
\begin{enumerate}
\item $q \in \mathbb{N}$, or
\item $q + \frac{3}{2} \in -\mathbb{N}$.
\end{enumerate}
\end{itemize}
Observe that algebraicity conditions turn out to be the same for either of the subcases.

\item $\mathbf{n \geq 1, q\notin -\mathbb{N}^*, q+\frac{3}{2}\notin -\mathbb{N}}$.
\begin{lem}\label{lem:comb1}
For all $n \in \mathbb{N}, q\in\mathbb{Q}\setminus\{-1,-\frac{3}{2},-2,\ldots,-\frac{n+1}{2}\}$ there exist $c \in \mathbb{Q}$ and $ C(x) \in \mathbb{Q}[x]_{n-1}$ such that
\begin{equation}\label{eq:{4.4}}\tag{4.4}
\int{x^n (1+a_1x+a_2x^2)^q dx} = c\int{(1+a_1x+a_2x^2)^qdx} + C(x)(1+a_1x+a_2x^2)^{q+1}.
\end{equation}
\end{lem}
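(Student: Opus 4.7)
The plan is to prove the identity by induction on $n$, using a reduction formula obtained from integration by parts, equivalently from differentiating $x^{n-1}(1+a_1x+a_2x^2)^{q+1}$. Writing $P=1+a_1x+a_2x^2$ for brevity, a direct expansion of $\frac{d}{dx}[x^{n-1}P^{q+1}] = (n-1)x^{n-2}P^{q+1} + (q+1)x^{n-1}P'P^q$ using $P' = a_1 + 2a_2 x$ gives
\[
\frac{d}{dx}\bigl[x^{n-1}P^{q+1}\bigr] \;=\; \bigl[(n-1)x^{n-2} + (n+q)\,a_1\,x^{n-1} + (n+2q+1)\,a_2\,x^n\bigr]P^q.
\]
Integrating both sides and solving for the leading term yields, whenever $n+2q+1\neq 0$, the reduction
\[
\int x^n P^q\,dx \;=\; \frac{x^{n-1}P^{q+1}}{(n+2q+1)\,a_2} \;-\; \frac{(n+q)\,a_1}{(n+2q+1)\,a_2}\!\int x^{n-1}P^q\,dx \;-\; \frac{n-1}{(n+2q+1)\,a_2}\!\int x^{n-2}P^q\,dx.
\]

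I would then run the induction on $n$, with base case $n=0$ trivially handled by $c=1$ and $C\equiv 0$. In the inductive step, I substitute the decompositions promised by the hypothesis for $\int x^{n-1}P^q\,dx$ and $\int x^{n-2}P^q\,dx$ into the reduction formula above. The hypothesis applies to those two integrals provided $q\notin\{-1,-3/2,\ldots,-n/2\}$, while the reduction itself requires $q\neq -(n+1)/2$; the union is precisely the excluded set $\{-1,-3/2,\ldots,-(n+1)/2\}$ stated in the lemma. Collecting the contributions splits $\int x^n P^q\,dx$ into a rational multiple of $\int P^q\,dx$ plus an algebraic part of the form $C(x)P^{q+1}$, as required.

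The remaining point is the degree of $C(x)$. After substitution one obtains
\[
C(x) \;=\; \frac{1}{(n+2q+1)\,a_2}\bigl[x^{n-1} - (n+q)\,a_1\,C_1(x) - (n-1)\,C_2(x)\bigr],
\]
where $C_1\in\mathbb{Q}[x]_{n-2}$ and $C_2\in\mathbb{Q}[x]_{n-3}$ come from the induction hypothesis. The leading term is $x^{n-1}/((n+2q+1)a_2)$, and all other summands have strictly lower degree, so $C\in\mathbb{Q}[x]_{n-1}$ as claimed. The main (and essentially only) subtlety is the careful bookkeeping of the excluded half-integer values of $q$ at each inductive level, ensuring that no forbidden value outside the stated set is introduced; this is routine once one notices that the new constraint $q\neq -(n+1)/2$ at level $n$ is exactly the half-integer immediately beyond those required at levels $n-1$ and $n-2$.
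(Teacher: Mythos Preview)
Your proof is correct. The reduction identity you derive by differentiating $x^{n-1}P^{q+1}$ is valid, the induction is set up properly (for $n=1$ the offending $x^{n-2}$ term carries the coefficient $n-1=0$ and disappears), and your bookkeeping of the excluded half-integers is exactly right: level $n$ adds the single new constraint $q\neq -(n+1)/2$ to those inherited from levels $n-1$ and $n-2$.

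The paper's argument is mathematically the same but is organised differently. Instead of inducting, it writes down the ansatz $C(x)=\sum_{i=0}^{n-1}c_i x^i$, differentiates the target identity once, divides by $P^q$, and matches coefficients in the resulting polynomial identity $x^n=c+C'(x)P+(q+1)C(x)P'$. This yields a triangular linear system that is solved top-down, producing the explicit formulas
\[
c_{n-1}=\frac{1}{a_2(n+1+2q)},\qquad c_i=-\frac{(i+2)c_{i+2}+a_1(i+2+q)c_{i+1}}{a_2(i+2+2q)},
\]
and the excluded values of $q$ are read off from the denominators $a_2(i+2+2q)$. Your reduction formula is precisely one step of this back-substitution, so the two approaches are equivalent; yours packages the recursion as an induction on $n$, while the paper solves the whole system in one shot and thereby gets closed formulas for $c$ and all coefficients of $C(x)$ directly.
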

\begin{proof}
Differentiating both sides of \eqref{eq:{4.4}} yields
\[x^n (1+a_1x+a_2x^2)^q = c(1+a_1x+a_2x^2)^q+ C^\prime(x)(1+a_1x+a_2x^2)^{1+q} + C(x)(1+q)(a_1+2a_2x)(1+a_1x+a_2x^2)^{q}.\]
Divide both sides of the equation above by $(1+a_1x+a_2x^2)^q$:
\begin{equation} \label{eq:{4.5}}\tag{4.5}
x^n = c + C^\prime(x)(1+a_1x+a_2x^2) + C(x)(a_1+2a_2x)(1+q).
\end{equation}

We write $C(x) = \sum\limits_{i=0}^{n-1}c_ix^i$ and show that it is always possible to find $c, c_0, c_1, \ldots, c_{n-1} \in \mathbb{Q}$ such that equation \eqref{eq:{4.5}} holds. 
Each side of \eqref{eq:{4.5}} is simply a polynomial of degree $n$. \\
If $n=1$, then one solves
\[x = c + c_0(a_1+2a_2x)(1+q).\]
for $c$ and $c_0$ and gets
\[c_0 = \frac{1}{2a_2(1+q)},\]
\[c = -\frac{a_1}{2a_2}.\]

Otherwise, write
\[x^n = c + (1+a_1x+a_2x^2)\sum\limits_{i=0}^{n-2}(i+1)c_{i+1}x^{i} + (1+q)(a_1+2a_2x)\sum\limits_{i=0}^{n-1}c_ix^i.\]
Rearranging the terms gives:
\begin{align*}
x^n &= c + c_1 + a_1c_0(q+1)  \\
&+ \sum\limits_{i=1}^{n-2} (c_{i+1}(i+1) + c_i \left(a_1i + a_1(q+1)\right) + c_{i-1} \left(a_2(i-1) + 2a_2(q+1))\right)x^i \label{4.6}\tag{4.6}\\
&+ (c_{n-1}\left(a_1(n-1)+a_1(q+1)\right)+c_{n-2}\left(a_2(n-2) + 2a_2(q+1)\right))x^{n-1} \\
& + (a_2(n-1)c_{n-1}+2(q+1)a_2c_{n-1})x^n.
\end{align*}

One can now derive explicit expressions for $c,c_0,\ldots,c_{n-1}$:
\begin{align*}
c_{n-1} &= \frac{1}{a_2(n+1+2q)}, \\
c_{n-2} &= -\frac{a_1(n+q)}{a_2(n+2q)}c_{n-1}, \\
c_{i} &= -\frac{c_{i+2}(i+2) + c_{i+1}a_1\left(i + 2 + k\right)}{a_2(i + 2 + 2q)}, \quad i=0,\ldots,n-3 \\
c &= -c_1 - a_1c_0(q+1).
\end{align*}

Starting from $c_{n-1}$, one can iteratively compute all $c_i$ for $i=0,\ldots,n-1$, as well as $c$. This procedure only fails if a zero appears in a denominator during computation. Since $a_2 \neq 0$ by assumption, we need to check that $i + 2 + 2q \neq 0$ for $i=0,\ldots,n-1$. Equivalently, that
\[q \neq -1 - \frac{i}{2} \text{ for } i = 0,\ldots,n-1.\]
\\
This is precisely where the condition $q\in\mathbb{Q}\setminus\{-1,-\frac{3}{2},-2,\ldots,-\frac{n+1}{2}\}$ comes from: it ensures that $c, c_0, \ldots, c_{n-1}$ are well defined.
\end{proof}
Hence, for $q \in \mathbb{Z}\setminus\{-1,-\frac{3}{2},\ldots,-\frac{n+1}{2}\}$ $\int{x^n (1+a_1x+a_2x^2)^q dx}$ is algebraic if and only if $\int{(1+ax+bx^2)^qdx}$ is algebraic, or $c = 0$. Using our classification for the case $n=0$, we deduce the following: 
\begin{cor}\label{cor6.1}
If $n \in \mathbb{N}, q\notin -\mathbb{N}^*, q+\frac{3}{2}\notin -\mathbb{N}$, then $\int{x^n (1+a_1x+a_2x^2)^q dx}$ is algebraic if and only if 
\begin{enumerate}
\item $q\in\mathbb{N}$, or
\item $c = 0$, where $c$ is the constant from \eqref{eq:{4.4}}.
\end{enumerate}
\end{cor}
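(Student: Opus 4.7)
The proof proposal is a direct exploitation of \cref{lem:comb1} together with the $n=0$ classification already obtained.

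First I would recall the decomposition provided by \cref{lem:comb1}. Under the hypotheses $n\in\mathbb{N}$ and $q\in\mathbb{Q}\setminus\{-1,-\tfrac{3}{2},-2,\ldots,-\tfrac{n+1}{2}\}$ (which are implied by the assumptions of the corollary, since $q\notin -\mathbb{N}^*$ and $q+\tfrac{3}{2}\notin -\mathbb{N}$), there exist $c\in\mathbb{Q}$ and $C(x)\in\mathbb{Q}[x]_{n-1}$ such that
\[
\int x^n (1+a_1x+a_2x^2)^q\,dx \;=\; c\int(1+a_1x+a_2x^2)^q\,dx \;+\; C(x)(1+a_1x+a_2x^2)^{q+1}.
\]
The term $C(x)(1+a_1x+a_2x^2)^{q+1}$ is always algebraic, being the product of a polynomial and an algebraic power. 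Hence algebraicity of the left-hand side is equivalent to algebraicity of the scalar multiple $c\int(1+a_1x+a_2x^2)^q\,dx$.

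Next I would split according to whether $c$ vanishes. If $c=0$, then the integral equals $C(x)(1+a_1x+a_2x^2)^{q+1}$, which is algebraic; this gives one direction of the equivalence. If $c\neq 0$, algebraicity of $c\int(1+a_1x+a_2x^2)^q\,dx$ is equivalent to algebraicity of $\int(1+a_1x+a_2x^2)^q\,dx$, and I can invoke the $n=0$ analysis carried out earlier in the excerpt: under either sign of $a_1^2-4a_2$, the primitive $\int(1+a_1x+a_2x^2)^q\,dx$ is algebraic if and only if $q\in\mathbb{N}$ or $q+\tfrac{3}{2}\in -\mathbb{N}$. Since the hypothesis of the corollary rules out $q+\tfrac{3}{2}\in -\mathbb{N}$, the remaining possibility is $q\in\mathbb{N}$.

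Combining the two cases yields the stated equivalence: the integral is algebraic iff $q\in\mathbb{N}$ or $c=0$. There is no genuine obstacle here; the only point requiring care is bookkeeping of the excluded values of $q$, to confirm that the hypotheses of \cref{lem:comb1} are indeed satisfied and that the $n=0$ classification applies verbatim (in particular, that the $q=-1$ subcase, which produced transcendental $\operatorname{artanh}$-type antiderivatives, is covered by the exclusion $q\notin -\mathbb{N}^*$).
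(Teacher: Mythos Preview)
Your proposal is correct and follows essentially the same argument as the paper: invoke the decomposition from \cref{lem:comb1}, note that $C(x)(1+a_1x+a_2x^2)^{q+1}$ is algebraic, and then reduce to the $n=0$ classification according to whether $c=0$ or not. The paper's own proof is even terser, simply citing closure of algebraic functions under addition after writing down \eqref{eq:{4.4}}, but the content is identical.
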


\begin{proof}
By \eqref{eq:{4.4}}, 
\[\int{x^n (1+a_1x+a_2x^2)^qdx} = c\int{(1+a_1x+a_2x^2)^qdx} + \underbrace{C(x)(1+a_1x+a_2x^2)^{q+1}}_{\text{algebraic}}.\]

The corollary follows immediately from the fact that the set of algebraic functions is closed under addition.
\end{proof}

\item $\mathbf{n \geq 1, q+\frac{3}{2}\in -\mathbb{N}}$.
There are three possible scenarios depending on $n$: 
\begin{itemize}
\item[3.1] $\mathbf{n < -2q-1}$, i.e. $q < -\frac{n+1}{2}$. Then we simply apply \cref{cor6.1} to deduce that $\int{x^n (1+ax+bx^2)^qdx}$  is algebraic if and only if $\int{ (1+ax+bx^2)^qdx}$ is algebraic. Note that $\int{ (1+ax+bx^2)^qdx}$ is always algebraic by the case analysis performed in 2.
\item[3.2] $\mathbf{n = -2q-1}$. Equivalently, $q = -\frac{n+1}{2}$. Since $q+\frac{3}{2}\in -\mathbb{N}$, $n$ must be even.

\begin{lem}\label{lem:4.4}
\[\int x^n(1+a_1x+a_2x^2)^{-\frac{n+1}{2}}dx, \quad a_1,a_2 \in \mathbb{Q}^*, \ n \in 2\mathbb{N} \]
is not globally bounded.
\end{lem}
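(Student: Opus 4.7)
My plan is to invoke the corollary of \cref{thm:André} (combined with \cref{thm:Eisenstein}) already discussed in the excerpt: since the integrand $x^{n}(1+a_1x+a_2x^2)^{-(n+1)/2}$ is algebraic over $\mathbb{Q}(x)$, its primitive is globally bounded if and only if it is algebraic. It is thus enough to show that the primitive is transcendental, which I propose to do by exhibiting a residue-at-infinity obstruction.

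Concretely, I change to the local coordinate $\tau=1/x$ at the point $\infty$. A direct substitution yields
\[
1+a_1x+a_2x^{2}=\frac{a_2}{\tau^{2}}\Bigl(1+\frac{a_1}{a_2}\tau+\frac{1}{a_2}\tau^{2}\Bigr),
\]
and therefore
\[
x^{n}(1+a_1x+a_2x^{2})^{-(n+1)/2}\,dx \;=\; -\,a_2^{-(n+1)/2}\,h(\tau)\,\frac{d\tau}{\tau},
\]
where $h(\tau):=\bigl(1+(a_1/a_2)\tau+(1/a_2)\tau^{2}\bigr)^{-(n+1)/2}$ is a formal power series in $\tau$ with $h(0)=1$. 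Hence the $1$-form has a simple pole at $\tau=0$ with residue $-a_2^{-(n+1)/2}$, which is nonzero because $a_2\in\mathbb{Q}^{*}$. Equivalently, the integrand admits the Laurent expansion $a_2^{-(n+1)/2}x^{-1}+O(x^{-2})$ at $\infty$.

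Now suppose, for contradiction, that the antiderivative $F(x)$ were algebraic. Then on every branch at $x=\infty$ it would admit a Puiseux expansion $F(x)=\sum_{k\in\mathbb{Z}} c_{k}\,x^{-k/e}$ with only rational exponents and no logarithmic terms. Termwise differentiation gives $F'(x)=\sum_{k}c_{k}(-k/e)\,x^{-k/e-1}$, whose $x^{-1}$ coefficient must vanish: the condition $-k/e-1=-1$ forces $k=0$, and the derivative of the constant term is zero. But $F'(x)$ is precisely the integrand, whose $x^{-1}$ coefficient at infinity equals $a_2^{-(n+1)/2}\neq 0$. This contradiction shows that $F$ is transcendental, and by the opening reduction $F$ is not globally bounded.

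The main obstacle, as far as I can see, is the clean justification of the ``no $\log$ in the Puiseux expansion of an algebraic function at $\infty$'' step; this I intend to handle by a direct appeal to the Newton--Puiseux theorem applied to each branch above $x=\infty$. A robust alternative is to carry out an explicit Hermite-style reduction (along the lines of \cref{lem:comb1} but tuned to $q=-(n+1)/2$) that expresses $\int x^{n}(1+a_1x+a_2x^{2})^{-(n+1)/2}\,dx$ as $a_2^{-(n+1)/2}\int (1+a_1x+a_2x^{2})^{-1/2}\,dx$ plus an algebraic function, thereby reducing the transcendence claim to the classical transcendence of $\operatorname{artanh}$ and $\arctan$ already observed in case~1 of the algorithm.
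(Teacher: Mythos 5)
Your proof is correct, but it takes a genuinely different route from the paper's. The paper argues arithmetically, in the spirit of Klazar--Luca: writing $(1+a_1x+a_2x^2)^{-(n+1)/2}=\sum_k d_kx^k$, the primitive has coefficients $d_{k-n-1}/k$, and an explicit mod-$p$ analysis of the binomial expression for $d_m$ shows that for every sufficiently large prime $p$ coprime to the data, $d_{p-n-1}$ is \emph{not} divisible by $p$ (the parity hypothesis $n\in2\mathbb{N}$ is used precisely to isolate the single surviving index $k=\frac{p-n-1}{2}$); hence the coefficient of $x^{p}$ has denominator divisible by $p$, and almost integrality fails directly. You instead prove transcendence of the primitive via the nonvanishing residue at infinity --- the pullback $-a_2^{-(n+1)/2}h(\tau)\,d\tau/\tau$ is computed correctly, and the Newton--Puiseux step (no logarithms, and the derivative of a Puiseux series has no $x^{-1}$ term) is the standard argument --- and then convert transcendence into failure of global boundedness through the corollary of \cref{thm:André} and \cref{thm:Eisenstein}, which applies because the integrand is an algebraic element of $\mathbb{Q}[[x]]$. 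Note the reversal of logical direction: the paper deduces ``not algebraic'' from ``not globally bounded'' via Eisenstein, whereas you deduce ``not globally bounded'' from ``not algebraic'' via André's much deeper theorem (already invoked elsewhere in this section, so no circularity). What each buys: the paper's proof is elementary and self-contained and exhibits explicitly which primes enter the denominators; yours is shorter, identifies the conceptual obstruction (a nonzero residue forces a logarithm in any antiderivative), and never uses the parity of $n$, so it would cover the case of odd $n$ as well. One small slip in your fallback sketch: matching residues at infinity shows the constant in the proposed reduction to $\int(1+a_1x+a_2x^2)^{-1/2}dx$ should be $a_2^{-n/2}$ rather than $a_2^{-(n+1)/2}$; only its nonvanishing matters, and that again follows from your residue computation.
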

Our proof technique mirrors the computation from \cite[p.~71]{KL05}.
\begin{proof}
Let $(1+a_1x+a_2x^2)^{-\frac{n+1}{2}} = \sum\limits_{k\geq0}d_kx^k$. Then
\[\int x^n(1+a_1x+a_2x^2)^{-\frac{n+1}{2}}dx = \int \sum\limits_{k\geq0}d_kx^{n+k} dx = \sum\limits_{k\geq n+1}\frac{d_{k-n-1}}{k}x^{k}.\]

\begin{align*}
(1+a_1x+a_2x^2)^{-\frac{n+1}{2}} &= \sum\limits_{k\geq0}\binom{-(n+1)/2}{k}(a_1x+a_2x^2)^k \\&= \sum\limits_{k\geq0}\binom{-(n+1)/2}{k}x^k\sum\limits_{i=0}^{k}\binom{k}{i}x^ia_1^{k-i}a_2^i, \\
d_m = [x^m] \quad (1+a_1x+a_2x^2)^{-\frac{n+1}{2}} &= \sum\limits_{k=0}^{m}\binom{-(n+1)/2}{k} \binom{k}{m-k}a_1^{2k-m}a_2^{m-k}.
\end{align*}
In addition,
\begin{align*}
\binom{-(n+1)/2}{k} &= \frac{-\frac{n+1}{2}(-\frac{n+1}{2}-1)\ldots(-\frac{n+1}{2}-k+1)}{k!} \\ &= \frac{(-n-1)(-n-3)\ldots(-n-2k+1)}{2^kk!} \\ &= \frac{(-1)^{k}}{2^k}\frac{(n+1)(n+3)\ldots(n+2k-1)}{k!}.
\end{align*}
Let $p > 3$ be a prime number, pairwise coprime with $a_1$ and $a_2$. Then, for $k$ between 0 and $p$:
\begin{align*}
\binom{-(n+1)/2}{k} \not\equiv 0 \text{ mod } p \iff n+2k-1 < p &\iff 2k < p - n + 1,\\
\binom{k}{p-n-1-k} = 0 \text{ for } k < p-n-1-k &\iff 2k < p-n-1.
\end{align*}

Consequently, \[\binom{-(n+1)/2}{k}\binom{n}{p-n-1-k} \not\equiv 0 \text{ mod } p \] if and only if 
\[p-n-1 \leq 2k < p-n+1.\]
Observe that since $p$ is odd and $n$ is even, the condition above is equivalent to $k = \frac{p-n-1}{2}$.

Hence, if $m = p-n-1$, the only product of the form 
\[\binom{-(n+1)/2}{k} \binom{k}{p-n-1-k}\]
that appears in $d_{p-n-1}$ and is not divisible by $p$ is the one corresponding to $k = \frac{p-n-1}{2}$. Therefore, $d_{p-n-1}$ is not divisible by $p$, which proves that the denominators of $\frac{d_{k-n-1}}{k}$ become divisible by arbitrarily large primes. So $\sum\limits_{k\geq n+1}\frac{d_{k-n-1}}{k}x^{k}$ is not almost integral.
\end{proof}
Hence, $\int{x^{-2q-1} (1+ax+bx^2)^qdx}$ is not globally bounded and not algebraic.
\item[3.3] $\mathbf{n > -2q-1}$. Essentially, this case is not very different from the one where $q\in\mathbb{Q}\setminus\{-1,-\frac{3}{2},-2,\ldots,-\frac{n+1}{2}\}$, as the following identity holds:
\begin{align*}\label{eq:{4.7}}\tag{4.7}
\int{x^n (1+a_1x+a_2x^2)^q dx} &= \underbrace{c\int{(1+a_1x+a_2x^2)^qdx}}_{\text{algebraic}} \\&+ \tilde{c}\underbrace{\int{x^{-2q-1}(1+a_1x+a_2x^2)^qdx}}_{\text{transcendental}} \\&+ \underbrace{C(x)(1+a_1x+a_2x^2)^{q+1}}_{\text{algebraic}}.
\end{align*}
Where $c, \tilde{c} \in \mathbb{Q}$ and $ C(x) \in \mathbb{Q}[x]_{n-1}$. In particular, this implies $\int{x^n (1+a_1x+a_2x^2)^q dx}$ is not algebraic unless $\tilde{c} = 0$.

The proof is analogous to that of \eqref{eq:{4.4}} except for the computation of $c_{-2q-2}$. Similarly, write
\[x^n = c + \tilde{c}x^{-2q-1}+ (1+a_1x+a_2x^2)\sum\limits_{i=0}^{n-2}(i+1)c_{i+1}x^{i} + (1+q)(a_1+2a_2x)\sum\limits_{i=0}^{n-1}c_ix^i.\]
We apply the same coefficient matching but with additional ``patching'' at $x^{-2q-1}$:
If $n = -2q$, 
\begin{align*}
x^{-2q} &= c + c_1 + a_1c_0(q+1)  \\
&+ \sum\limits_{1 \leq i \leq -2q-2} ((i+1)c_{i+1} + a_1(i+q+1)c_i + a_2(i+2q+1)c_{i-1})x^i \\
& +(\tilde{c}-a_1qc_{-2q-1})x^{-2q-1} \\
& + a_2c_{-2q-1}x^{-2q}.
\end{align*}

Otherwise,
\begin{align*}
x^n &= c + c_1 + a_1c_0(q+1)  \\
&+ \sum\limits_{\substack{1 \leq i \leq n-2 \\ i \neq -2q-1}} ((i+1)c_{i+1} + a_1(i+q+1)c_i + a_2(i+2q+1)c_{i-1})x^i \\
&+ (\tilde{c}-2qc_{-2q} -a_1qc_{-2q-1})x^{-2q-1} \\
&+ (a_1(n+q)c_{n-1}+a_2(n+2q)c_{n-2})x^{n-1} \\
& + (a_2(n+2q+1)c_{n-1})x^n.
\end{align*}
Observe that previously $c_{-2q-2}$ was computed from the equation \[(i+1)c_{i+1} + a_1(i+q+1)c_i + a_2(i+2q+1)c_{i-1}= 0, \quad i = -2q-1.\] 
Or, in the case $n = -2q$, from
\[a_1(n+q)c_{n-1}+a_2(n+2q)c_{n-2}= 0, \quad n = -2q.\]  
However, when $i = -2q-2$, the coefficient of $c_{-2q-2}$ in the expressions above becomes zero. This is when $\tilde{c}$ comes into play: it allows to cancel out the coefficient of $x^{-2q-1}$. Hence, if $n \neq -2q$:
\begin{align*}
c_{n-1} &= \frac{1}{a_2(n+1+2q)}, \\
c_{n-2} &= -\frac{a_1(n+q)}{a_2(n+2q)}c_{n-1}, \\
c_{i} &= -\frac{c_{i+2}(i+2) + c_{i+1}a_1\left(i + 2 + q\right)}{a_2(i + 2 + 2q)}, \quad i=0,\ldots,n-3, \ i \neq -2q-2, \\
c_{-2q-2} &=  0 \quad \text{(could be any value)},\\
c &= -c_1 - a_1c_0(q+1), \\
\tilde{c} &=  2qc_{-2q} + a_1qc_{-2q-1}.
\end{align*}
If $n = -2q$:
\begin{align*}
c_{-2q-1} &= \frac{1}{a_2}, \\ 
c_{-2q-2} &=  0 \quad \text{(could be any value)},\\
c_{i} &= -\frac{c_{i+2}(i+2) + c_{i+1}a_1\left(i + 2 + q\right)}{a_2(i + 2 + 2q)}, \quad i=0,\ldots,-2q-3, \\
c &= -c_1 - a_1c_0(q+1), \\
\tilde{c} &=  a_1qc_{-2q-1}.
\end{align*}
\end{itemize}

\item $\mathbf{n\ge1, q\in -\mathbb{N}^*}$.
Since the double root case was treated earlier, we may assume 
\[1+a_1x+a_2x^2 = (1+\alpha x)(1+\beta x), \ \alpha \neq \beta.\] Moreover, $\alpha, \beta \neq 0$. Apply a partial fraction decomposition to the integrand:
\begin{equation}\label{eq:{4.8}} \tag{4.8}
\frac{x^n}{(1+a_1x+a_2x^2)^{-q}} = \frac{x^n}{(1+\alpha x)^{-q} (1+\beta x)^{-q}} = \sum\limits_{m = 1}^{-q} \left(\frac{A_m}{(1+\alpha x)^m} + \frac{B_m}{(1+\beta x)^m} \right).
\end{equation}
Where $A_m$ and $B_m$ are coefficients to be determined. Let us integrate the sum in \eqref{eq:{4.8}}:
\begin{align*} \label{eq:{4.9}} \tag{4.9}
&\int{\frac{x^n}{(1+a_1x+a_2x^2)^{-q}} dx} = \int{\sum\limits_{m = 1}^{-q} \left(\frac{A_m}{(1+\alpha x)^m} + \frac{B_m}{(1+\beta x)^m}\right) dx} \\&= A_1 \frac{\ln(1+\alpha x)}{\alpha} + B_1 \frac{\ln(1+\beta x)}{\beta} + \sum\limits_{m = 2}^{-q} \left(\frac{A_m}{\alpha(1-m)(1+\alpha x)^{m-1}} + \frac{B_m}{\beta(1-m)(1+\beta x)^{m-1}}\right).
\end{align*}
In other words, the result consists of a sum of rational functions and logarithmic terms. The only scenario in which \eqref{eq:{4.9}} can be algebraic is if $A_1 = B_1 = 0$. The coefficients $A_1$ and $B_1$ can be computed either via the generalized residue formula:
\[
A_1 = \frac{1}{(-q-1)!}
  \left.\frac{d^{\,-q-1}}{dx^{\,-q-1}}
    \Bigl[\frac{x^n}{(1+\beta x)^{-q}}\Bigr]
  \right|_{x=-\frac{1}{\alpha}} \neq 0,\]
\[
B_1 = \frac{1}{(-q-1)!}
  \left.\frac{d^{\,-q-1}}{dx^{\,-q-1}}
    \Bigl[\frac{x^n}{(1+\alpha x)^{-q}}\Bigr]
  \right|_{x=-\frac{1}{\beta}}
\neq 0.\]
or symbolically, as they naturally arise when integrating rational functions, for instance using Hermite reduction \cite[Chapter 22]{gerhard2013modern}. This is a standard and largely automated procedure in computer algebra systems.
\end{enumerate}

\begin{rem}
Although the case $b_0=0$ falls outside the class we are considering, it is still easy to treat. $S(x)$ writes as
\[S(x) = (a_2x^2 + a_1x + 1)^{-1 - \frac{b_1}{a_1}}\left(((a_1+b_1)s_0+s_1)\int(a_2x^2 + a_1x + 1)^{\frac{b_1}{a_1}} + c_1\right).\]
Hence, if $\frac{b_1}{a_1} \in \mathbb{N}$ or $\frac{b_1}{a_1}+\frac{3}{2} \in -\mathbb{N}$, then \cref{case:alg} by the analysis performed for $n=0$.
Otherwise, \cref{case:line} with $(1,-a_1-b_1)$.
\end{rem}

\subsubsection{Transcendence in both components.}
\begin{flushright}
\footnotesize\itshape
``Though this be madness, yet there is method in’t.''\\
\textup{--- W. Shakespeare, \textit{Hamlet}, Act II, Scene II.}
\end{flushright}
Even if both $I_1$ and $I_2$ are transcendental, the linear combination
\[
b_0s_0 I_1 + (a_1s_0 + b_0s_1 + b_1s_0 + s_1)I_2
\]
might still be algebraic. In this section, we provide an algorithm to detect such cases.
\begin{enumerate}
\item $\mathbf{q\notin -\mathbb{N}^*, q+\frac{3}{2}\notin -\mathbb{N}}$. Then, by \cref{lem:comb1}:
\begin{align*}
I_1 &= c_{I}\int{(1+a_1x+a_2x^2)^qdx} + C_1(x)(1+a_1x+a_2x^2)^{q+1},\\
I_2 &= c_{II}\int{(1+a_1x+a_2x^2)^qdx} + C_2(x)(1+a_1x+a_2x^2)^{q+1}.
\end{align*}
The transcendence of $I_1$ and $I_2$ means two things: $\int{(1+a_1x+a_2x^2)^qdx}$ is transcendental, and $c_I,c_{II}\neq 0$. So
\begin{align*}
& b_0s_0 I_1 + (a_1s_0 + b_0s_1 + b_1s_0 + s_1)I_2 =\\ &= (b_0s_0c_I + (a_1s_0 + b_0s_1 + b_1s_0 + s_1)c_{II})\underbrace{\int{(1+a_1x+a_2x^2)^qdx}}_{\text{transcendental}} +\\&+ \underbrace{(b_0s_0C_1(x)+(a_1s_0 + b_0s_1 + b_1s_0 + s_1)C_2(x))(1+a_1x+a_2x^2)^{q+1}}_{\text{algebraic}}.
\end{align*}
can only be algebraic if 
\[b_0s_0c_I + (a_1s_0 + b_0s_1 + b_1s_0 + s_1)c_{II} = 0,\] 
which rewrites as 
\[s_1=\frac{s_0(b_0c_I + a_1c_{II}+b_1c_{II})}{(b_0+1)c_{II}}.\]
Hence, we are in \cref{case:line} and the algorithm returns $(c_{II}(b_0+1),b_0c_I + a_1c_{II}+b_1c_{II})$.
\item $\mathbf{q+\frac{3}{2}\in -\mathbb{N}}$.

Each of \( I_1 \), \( I_2 \) can be written in the form
\[
c\underbrace{\int{(1+a_1x+a_2x^2)^qdx}}_{\text{algebraic}} + \tilde{c}\underbrace{\int{x^{-2q-1}(1+a_1x+a_2x^2)^qdx}}_{\text{transcendental}} + \underbrace{C(x)(1+a_1x+a_2x^2)^{q+1}}_{\text{algebraic}},
\]
where the constant $\tilde{c}$ will be denoted as \( \tilde{c}_1 \) (respectively, \( \tilde{c}_2 \)) for \( I_1 \)(respectively, \( I_2 \)). Hence,
\begin{align*}
& b_0s_0 I_1 + (a_1s_0 + b_0s_1 + b_1s_0 + s_1)I_2 \\
&= (b_0s_0\tilde{c}_1 + (a_1s_0 + b_0s_1 + b_1s_0 + s_1)\tilde{c}_2)\int{x^{-2q-1}(1+a_1x+a_2x^2)^qdx} + \mathcal{A}(x).
\end{align*}
where \( \mathcal{A}(x) \) is some algebraic function. Similarly to the previous case, we are in \cref{case:line} and the algorithm returns $(\tilde{c}_2(b_0+1),b_0\tilde{c}_1 + a_1\tilde{c}_2+b_1\tilde{c}_2)$.
\item $\mathbf{q\in -\mathbb{N}^*}$. Then
\begin{align*}
&I_1 = A_1^{(1)} \frac{\ln(1+\alpha x)}{\alpha} + B_1^{(1)} \frac{\ln(1+\beta x)}{\beta} + \mathcal{A}_1(x), \\&
I_2 = A_1^{(2)} \frac{\ln(1+\alpha x)}{\alpha} + B_1^{(2)} \frac{\ln(1+\beta x)}{\beta} + \mathcal{A}_2(x).
\end{align*}
where \( \mathcal{A}_1(x),\mathcal{A}_2(x)  \) are some algebraic functions. So
\begin{align*}
b_0s_0 I_1 + (a_1s_0 + b_0s_1 + b_1s_0 + s_1)I_2 &= 
(b_0s_0A_1^{(1)} + (a_1s_0 + b_0s_1 + b_1s_0 + s_1)A_1^{(2)})\frac{\ln(1+\alpha x)}{\alpha} \\
&+ (b_0s_0B_1^{(1)} + (a_1s_0 + b_0s_1 + b_1s_0 + s_1)B_1^{(2)})\frac{\ln(1+\beta x)}{\beta} \\
&+  \mathcal{A}(x).
\end{align*}
Equivalently,
\begin{align*}
b_0s_0 I_1 + (a_1s_0 + b_0s_1 + b_1s_0 + s_1)I_2 &= 
(s_0(b_0A_1^{(1)} + a_1A_1^{(2)} + b_1A_1^{(2)}) + s_1A_1^{(2)}(b_0+1))\frac{\ln(1+\alpha x)}{\alpha} \\
&+ (s_0(b_0B_1^{(1)} + a_1B_1^{(2)} + b_1B_1^{(2)}) + s_1B_1^{(2)}(b_0+1))\frac{\ln(1+\beta x)}{\beta} \\
&+  \mathcal{A}(x).
\end{align*}
which is algebraic if and only if the coefficients multiplying the logarithmic terms are simultaneously equal to zero (we recall that $\alpha \neq \beta$). In other words, one needs 
\[
\begin{pmatrix}
b_0A_1^{(1)} + (a_1+b_1)A_1^{(2)} &  (b_0+1)A_1^{(2)}\\
b_0B_1^{(1)} + (a_1+b_1)B_1^{(2)} & (b_0+1)B_1^{(2)}
\end{pmatrix} 
\begin{pmatrix}
s_0\\s_1
\end{pmatrix} = 
\begin{pmatrix}
0\\0
\end{pmatrix}.
\]
Hence, we are in \cref{case:line} if the matrix
\[\mathcal{M} = 
\begin{pmatrix} 
b_0A_1^{(1)} + (a_1+b_1)A_1^{(2)} &  (b_0+1)A_1^{(2)}\\
b_0B_1^{(1)} + (a_1+b_1)B_1^{(2)} & (b_0+1)B_1^{(2)}
\end{pmatrix}
\]
is not invertible. Then the algorithm returns a nonzero element of its kernel. Otherwise, the conclusion is \cref{case:trans}.
\end{enumerate}

\begin{figure}[H]
\centering
\includegraphics[scale=0.45]{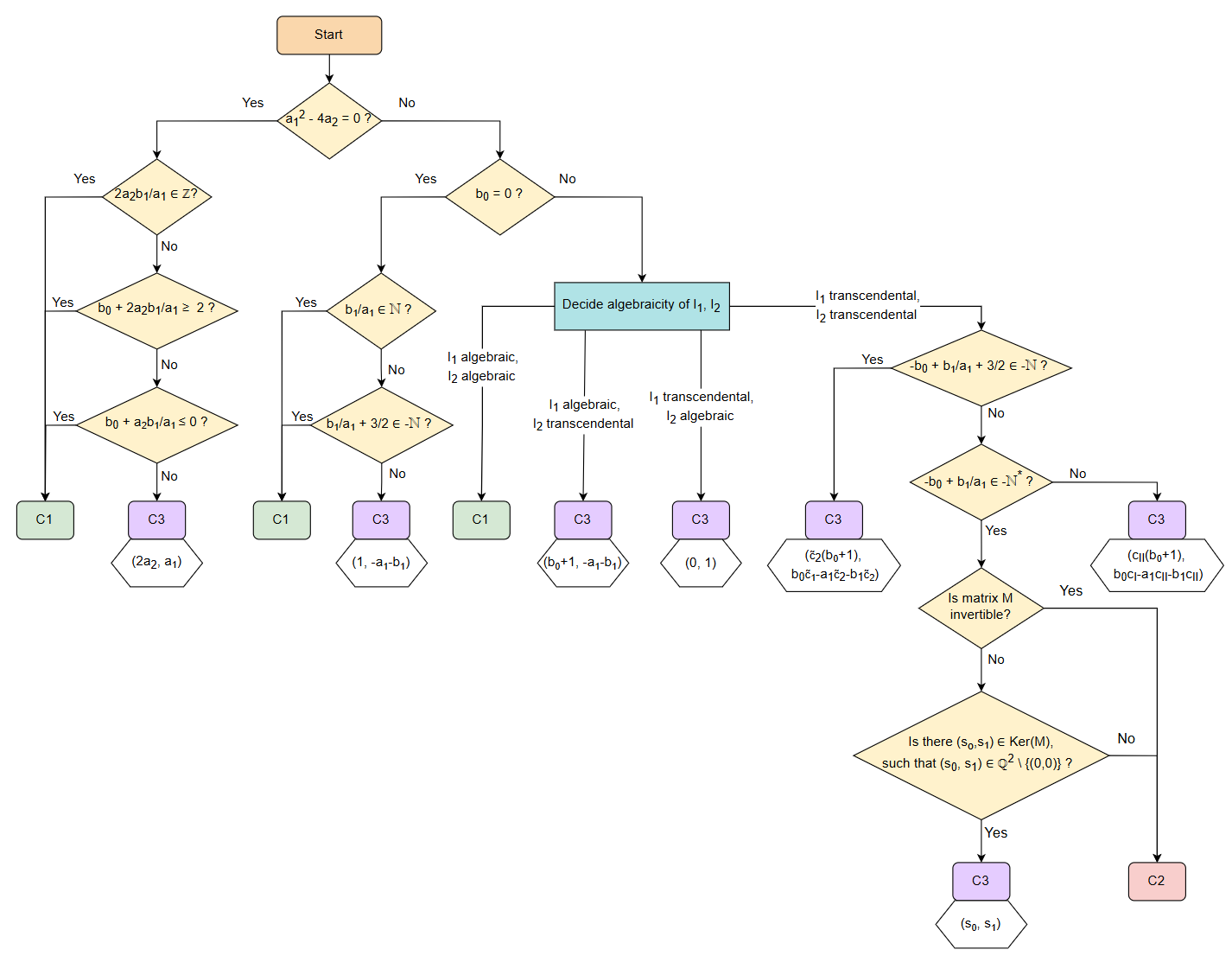}
\caption{Deciding which of the cases \cref{case:alg}, \cref{case:trans}, \cref{case:line} holds for a recurrence relation of type \eqref{eq:{4.1}} with $b_2 = \frac{2a_2b_1-a_1a_2b_0}{a_1}$. If the answer is \cref{case:line}, a pair $(s_0,s_1) \in \mathbb{Q}^2\setminus\{(0,0)\}$ such that $S(x)$ is algebraic is provided. The step ``Decide algebraicity of $I_1,I_2$'' is described in \cref{fig:alg}}.
\label{fig:main algo}
\end{figure}

\subsubsection{Examples}
\begin{enumerate}
\item \textbf{Motzkin numbers} (\href{https://oeis.org/A001006}{OEIS 
A001006}).

The recurrence
\[(n+2)m_n = (2n+1)m_{n-1}+(3n-3)m_{n-2}\]
corresponds to
\[b_0=2,a_1=-2,b_1=-1,a_2=-3,b_2=3.\]
We have $b_2 = \frac{2a_2b_1-a_1a_2b_0}{a_1}$, $a_1^2-4a_2\neq0$, $b_0\in\mathbb{N}^*$. Moreover, $q=-b_0+\frac{b_1}{a_1} = -\frac{3}{2} \implies q+\frac{3}{2}=0\in-\mathbb{N}$. 
\[I_1 = \int{x(1-2x-3x^2)^{-\frac{3}{2}}dx}, \qquad I_2 = \int{x^2(1-2x-3x^2)^{-\frac{3}{2}}dx}.\]
$b_0-1=1 < -2q-1 = 2 \implies$ $I_1$ is algebraic.\\
$b_0=2 = -2q-1 \implies$ $I_2$ is transcendental.

The algorithm returns \cref{case:line} with $(b_0+1,-(a_1+b_1)) = (3,3)$, proving that Motzkin-type sequences are globally bounded if and only if the two first terms coincide.

Alternatively, one can verify the transcendence of any solution with $m_0\neq m_1$ by initializing the recurrence with, for instance, $m_0=0,m_1=1$, converting it into a homogeneous differential equation, and then using the \textit{istranscendental} function from Maple’s \href{https://perso.ens-lyon.fr/bruno.salvy/software/the-gfun-package/}{gfun} package (available since version 4.10).
\item \textbf{Central trinomial coefficients } (\href{https://oeis.org/A002426}{OEIS A002426}).

The recurrence
\[ns_n = (2n-1)s_{n-1} + (3n-3)s_{n-2}\]
corresponds to
\[b_0=0,a_1=-2,b_1=1,a_2=-3,b_2=3.\]
We have $b_2 = \frac{2a_2b_1-a_1a_2b_0}{a_1}$, $a_1^2-4a_2\neq0$. $b_0=0$, so we look at $\frac{b_1}{a_1}=-\frac{1}{2}$. Since $\frac{b_1}{a_1} = \frac{1}{2} \notin \mathbb{N}$ and $\frac{b_1}{a_1}+\frac{3}{2} = 1 \notin -\mathbb{N}$, the algorithm returns \cref{case:line} with $(1,-(a_1+b_1)) = (1,1)$.

This is consistent with the fact that $s_0 = s_1 = 1$ yields not just an almost integral, but an integer sequence of the Central trinomial coefficients. In particular, we deduce that the only integer sequences satisfying the recurrence for the central trinomial coefficients are those with $s_0=s_1$, mirroring the case of Motzkin numbers.
\item \textbf{Large Schröder numbers} (\href{https://oeis.org/A006318}{OEIS A006318}) \textbf{: fully algebraic example.}

The recurrence
\[(n+1)s_n = (6n-3)s_{n-1}-(n-2)s_{n-2}\]
corresponds to
\[b_0=1,a_1=-6,b_1=3,a_2=1,b_2=-2.\]
We have $b_2 = \frac{2a_2b_1-a_1a_2b_0}{a_1}$, $a_1^2-4a_2\neq0$, $b_0\in\mathbb{N}^*$. Moreover, $q=-b_0+\frac{b_1}{a_1} = -\frac{3}{2} \implies q+\frac{3}{2}\in\mathbb{N}$. 
\[I_1 = \int{(1-6x+x^2)^{-\frac{3}{2}}dx}, \qquad I_2 = \int{x(1-6x+x^2)^{-\frac{3}{2}}dx}.\]
$b_0-1=0 < -2q-1 = 2 \implies$ $I_1$ is algebraic.\\
$b_0=1 = -2q-1 \implies$ $I_2$ is algebraic.

The algorithm returns \cref{case:alg}.
\item \textbf{Fully transcendental example}.

Consider the following innocent-looking recurrence:
\[(n+3)s_n = (n+1)s_{n-1} + (2n-2)s_{n-2} .\]
It corresponds to
\[b_0=3,a_1=-1,b_1=-1,a_2=-2,b_2=2.\]
We have $b_2 = \frac{2a_2b_1-a_1a_2b_0}{a_1}$, $a_1^2-4a_2\neq0$, $b_0\in\mathbb{N}^*$ and $q=-b_0+\frac{b_1}{a_1} = -2 \in-\mathbb{N}^*$. 
\[I_1 = \int{\frac{x^2}{(1-x-2x^2)^2}dx} = \frac{2}{27}\ln(2x-1) -\frac{2}{27}\ln(x+1) - \frac{1}{18(2x-1)}-\frac{1}{9(x+1)},\]
\[I_2 = \int{\frac{x^3}{(1-x-2x^2)^2}dx} = \frac{7}{108}\ln(2x-1)+\frac{5}{27}\ln(x+1) - \frac{1}{36(2x-1)}+\frac{1}{9(x+1)}.\]
So,
\[A_1^{(1)}=\frac{2}{27}, \ B_1^{(1)}=-\frac{2}{27}, \ A_1^{(2)}=\frac{7}{108}, \ B_1^{(2)}=\frac{5}{27}.\]
None of these numbers are equal to zero, in particular this implies that both $I_1$ and $I_2$ are transcendental. Moreover, the matrix
\[
\begin{pmatrix}
b_0A_1^{(1)} + (a_1+b_1)A_1^{(2)} &  (b_0+1)A_1^{(2)}\\
b_0B_1^{(1)} + (a_1+b_1)B_1^{(2)} & (b_0+1)B_1^{(2)}
\end{pmatrix} 
=
\begin{pmatrix}
5/54 &  7/27\\
-16/27 & 20/27
\end{pmatrix}. 
\]
is invertible, as its determinant equals $\frac{2}{9}\neq0$. Hence, no choice of $s_0,s_1$ can make $S(x)=\sum\limits_{n\ge0}s_nx^n$ algebraic. The algorithm returns \cref{case:trans}.
\end{enumerate}

\section{Integrality revisited }\label{sec:integrality}
\begin{flushright}
\footnotesize\itshape
``Coming back to where you started is not the same as never leaving.''\\
\textup{--- T. Pratchett, \textit{A Hat Full of Sky}.}
\end{flushright}
Although proving algebraicity does not directly address the problem of finding integer solutions to a recurrence, it often greatly simplifies the analysis. In the specific setting of second-order recurrences of the form
\[(n+b_0)s_n+(a_1n+b_1)s_{n-1}+(a_2n+b_2)s_{n-2}=0\]
with $b_2 = \frac{2a_2b_1-a_1a_2b_0}{a_1}$, integrality implies global boundedness, which in turn is equivalent to algebraicity. Thus, the problem of deciding whether a solution is integral reduces to detecting algebraicity — something that can be done using the algorithm developed in \cref{sec:alg GB} (see \cref{fig:main algo}). Analysing integrality within this algebraic framework is generally easier. 

For instance, if an algebraic solution has a closed-form expression involving rational powers of polynomials, its integrality can be established using the criteria derived by Pomerat and Straub in \cite{PS24} (Theorem 1.1 and multiple examples applied to specific cases).

\begin{ex} Consider a recurrence relation:
\begin{equation}\label{eq:{5}}\tag{5}
ns_n+(2n+3)s_{n-1}+9(n+3)s_{n-2}=0.
\end{equation}

Applying our algorithm (note that $b_0=0$), we find that $S(x) = \sum\limits_{n \geq 0}s_nx^n$ is globally bounded if and only if $s_1 = -5s_0$. In that case, one has
\[S(x) = s_0(1+2x+9x^2)^{-\frac{5}{2}}.\]
Clearly, all integral solutions are among the globally bounded ones. Thus, the problem reduces to identifying the values of $s_0$ for which $s_0(1+2x+9x^2)^{-\frac{5}{2}}$ lies in $\mathbb{Z}[[x]]$. 
\begin{prop}
If $s_0\in\mathbb{Z}$, then
\[s_0(1+2x+9x^2)^{-\frac{5}{2}}\in\mathbb{Z}[[x]].\]
\end{prop}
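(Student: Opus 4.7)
The scalar $s_0$ plays no role, since $\mathbb{Z}[[x]]$ is a $\mathbb{Z}$-module: everything reduces to proving
\[
(1+2x+9x^2)^{-5/2}\in\mathbb{Z}[[x]].
\]
My first step is to split the exponent as $-5/2 = -2 + (-1/2)$ and factor
\[
(1+2x+9x^2)^{-5/2} \;=\; \frac{1}{(1+2x+9x^2)^2}\cdot\frac{1}{\sqrt{1+2x+9x^2}}.
\]
The first factor lies in $\mathbb{Z}[[x]]$ for free, because $1+2x+9x^2$ is a unit in $\mathbb{Z}[[x]]$ (polynomial with constant term $1$), and so is its square. The entire argument therefore collapses to showing that $P(x)^{-1/2}\in\mathbb{Z}[[x]]$ for $P(x)=1+2x+9x^2$.

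For this square-root step I plan to invoke the criterion of Pomerat and Straub \cite{PS24}. Since $9\equiv 1\pmod{4}$, we have
\[
1+2x+9x^2 \;\equiv\; 1+2x+x^2 \;=\;(1+x)^2\pmod{4},
\]
so $P(x)$ is a square modulo $4$ in $\mathbb{Z}[x]$. This is exactly the hypothesis that lets one conclude $\sqrt{P}\in\mathbb{Z}[[x]]$, and the inverse $1/\sqrt{P}$ is then also in $\mathbb{Z}[[x]]$ because it is the inverse of a unit.

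If a self-contained argument is preferred, one can bypass \cite{PS24} by writing $1+2x+9x^2 = (1+x)^2+8x^2$, factoring $P(x)^{-1/2}=(1+x)^{-1}\bigl(1+8x^2/(1+x)^2\bigr)^{-1/2}$, and expanding the second factor via the binomial series $(1+8y)^{-1/2}=\sum_{n\ge 0}(-2)^n\binom{2n}{n}y^n$. Interchanging the resulting double sum yields the closed form
\[
[x^N]\,P(x)^{-1/2} \;=\; (-1)^N\sum_{n=0}^{\lfloor N/2\rfloor}(-2)^n\binom{2n}{n}\binom{N}{2n},
\]
which is manifestly an integer. The main obstacle in either route is exactly this square-root factor; the outer $(1+2x+9x^2)^{-2}$ and the initial reduction via $s_0$ are essentially formal. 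The delicate point on the self-contained route is just the bookkeeping in the double-sum reindexing; on the Pomerat--Straub route the only real check is the mod-$4$ congruence above.
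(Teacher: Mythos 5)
Your proposal is correct, and it is worth separating its two routes. The first route is essentially the paper's: the paper applies the Pomerat--Straub criterion (\cite[Example~1.8]{PS24}) directly to $(1+2x+9x^2)^{-5/2}$, whose exponent has denominator $k=2$, by checking $(a,b)=(2,9)\equiv(2,1)\pmod 4$; your splitting $-\tfrac52=-2+(-\tfrac12)$ and passing through $\sqrt{P}$ is a harmless detour, since the cited criterion handles the exponent $-\tfrac52$ in one step, and your ``square mod $4$'' phrasing is just the condition $(a,b)\equiv(2,1)\pmod 4$ in disguise for quadratics with constant term $1$. Your second route, however, is genuinely different from the paper and arguably more satisfying: writing $1+2x+9x^2=(1+x)^2+8x^2$, using that $(1+8y)^{-1/2}=\sum_{n\ge0}(-2)^n\binom{2n}{n}y^n$ has integer coefficients, and expanding $(1+x)^{-(2n+1)}$ gives the explicit formula
\[
[x^N]\,(1+2x+9x^2)^{-1/2}=(-1)^N\sum_{n=0}^{\lfloor N/2\rfloor}(-2)^n\binom{2n}{n}\binom{N}{2n},
\]
which is manifestly integral (I checked the reindexing: the inner binomial $\binom{2n+(N-2n)}{N-2n}=\binom{N}{2n}$ is right, and the coefficient-wise interchange of sums is legitimate for formal power series). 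Multiplying by the unit $(1+2x+9x^2)^{-2}\in\mathbb{Z}[[x]]$ and by $s_0\in\mathbb{Z}$ finishes the proof without any appeal to \cite{PS24}. What each approach buys: the paper's (and your first) route is a one-line application of a ready-made integrality criterion, while your self-contained route is elementary, yields closed-form integer coefficients, and would generalize to any $1+ax+bx^2$ that is congruent to the square of a unit of $\mathbb{Z}[[x]]$ modulo higher powers of $2$ in the same explicit way.
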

\begin{proof}
The claim is a direct consequence of the following result:
\begin{lem}(\cite[Example 1.8]{PS24})
Let $a,b \in \mathbb{Z}, \ \lambda \in \mathbb{Q}$. Let $k$ be the denominator of $\lambda$ brought to the lowest terms. Then $(1+ax+bx^2)^\lambda \in \mathbb{Z}[[x]]$ if and only if
\begin{itemize}
\item $a,b \in k \ rad(k) \mathbb{Z}$, or
\item $k = 2\kappa$ and $a,b \in \kappa \ rad(\kappa) \mathbb{Z}$ as well as $(a,b) \equiv (2,1)$ (mod 4).
\end{itemize}
\end{lem}
Here $rad(k)$ denotes the largest squarefree integer dividing $k$ (for example, $rad(24) = 6$).

In our setting $a = 2, b = 9, \lambda = -\frac{5}{2}, k = 2, \kappa = 1$. Since $2,9 \in \mathbb{Z}$ and $(2,9) \equiv (2,1)$ (mod 4) we get $(1+2x+9x^2)^{-\frac{5}{2}} \in \mathbb{Z}[[x]]$. Hence, for any $s_0 \in \mathbb{Z}$, $s_0(1+2x+9x^2)^{-\frac{5}{2}} \in \mathbb{Z}[[x]]$. 
\end{proof}

Since taking $s_0 \notin \mathbb{Z}$ would already violate the condition $s_n \in \mathbb{Z} \ \text{ for all } n \in \mathbb{N}$, we conclude that $S(x) \in \mathbb{Z}[[x]]$ if and only if $s_1 = -5s_0$ and $s_0 \in \mathbb{Z}$.
\end{ex}

The next figure summarizes the classification of solutions according to their arithmetic properties. Two solutions are called \emph{independent} if one is not a rational multiple of the other, which in our case translates to the condition $s_0\tilde{s}_1 \neq \tilde{s}_0s_1$. Remarkably, all six areas in the figure below are nonempty: such recurrence relations occur in practice.

\begin{figure}[H]
\centering
\includegraphics[scale=0.4]{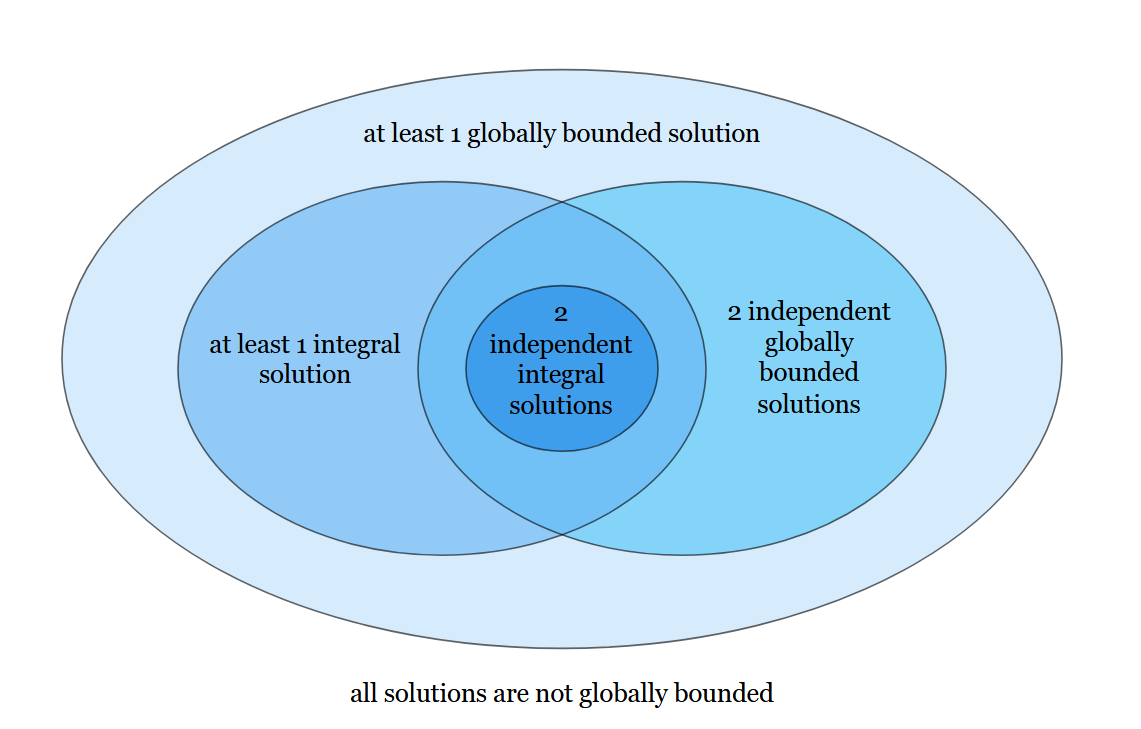}
\caption{Classification of the solution space of second-order linear recurrences with polynomial coefficients, according to integrality and global boundedness properties.}
\end{figure}

\section{Integrality criteria for small Apéry numbers}\label{sec:small Apéry numbers}
In this section, we investigate the sequences $(a_n)_n$ satisfying the recurrence relation 
\begin{equation}\label{eq:{5.1}}\tag{5.1}
n^2a_n = (11n^2-11n+3)a_{n-1} + (n-1)^2a_{n-2}, \quad n \geq 2.
\end{equation}
In the standard setting, this recurrence is initialized with $a_0=1,a_1=3$, yielding the sequence of small Apéry numbers (\href{https://oeis.org/A005258}{OEIS A005258}). These numbers also admit a binomial sum representation:
\begin{equation}\label{eq:{5.2}}\tag{5.2}
a_n = \sum\limits_{k=0}^{n} \binom{n}{k}^2 \binom{n+k}{k},
\end{equation}
which makes it clear that they are all integers. However, the situation becomes more complex when considering arbitrary initial conditions. Let $(a(\mu,\lambda)_n)_n$ denote the solution of \eqref{eq:{5.1}} with $a_0 = \mu, a_1 = \lambda$. The behaviour of these generalized Apéry-type sequences is described in the following theorem, which, to the best of our knowledge, is new, though similar to the case of big Apéry numbers (\href{https://oeis.org/A005259}{OEIS A005259}).

\begin{thm}\label{thm:7.1}
A sequence of rational numbers $(a_n)_n$ satisfying \eqref{eq:{5.1}} is integral if and only if $(a_0,a_1) = (\alpha, 3\alpha)$ for $\alpha \in \mathbb{Z}$.
\end{thm}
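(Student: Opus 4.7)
If $(a_0, a_1) = (\alpha, 3\alpha)$ with $\alpha \in \mathbb{Z}$, linearity of \eqref{eq:{5.1}} yields $a_n = \alpha \cdot a_n(1, 3)$, and $(a_n(1, 3))_n$ is integral by the binomial representation \eqref{eq:{5.2}}. This gives the ``if'' direction for free.

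For the converse, I would use the basis $\{A_n, B_n\} := \{a_n(1, 3),\; a_n(0, 1)\}$ of the two-dimensional solution space of \eqref{eq:{5.1}}; matching initial conditions gives $a_n(\mu, \lambda) = \mu A_n + (\lambda - 3\mu)\,B_n$ for every $n \geq 0$. Assuming $(a_n(\mu, \lambda))_n$ is integral, the values $\mu = a_0$ and $\lambda = a_1$ are integers and, because $A_n \in \mathbb{Z}$, the constraint reduces to $(\lambda - 3\mu)\,B_n \in \mathbb{Z}$ for every $n$. The theorem therefore follows from the arithmetic claim
\[\limsup_{n \to \infty} P(\mathrm{den}(B_n)) = \infty,\]
in the notation of \cref{thm:1.1}: once arbitrarily large primes appear in the denominators of $(B_n)_n$, picking a prime $p > |\lambda - 3\mu|$ with $p \mid \mathrm{den}(B_n)$ for some $n$ rules out $(\lambda - 3\mu)\,B_n \in \mathbb{Z}$ unless $\lambda - 3\mu = 0$, which forces $\lambda = 3\mu$.

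To attack this $\limsup$ statement I would follow the Klazar--Luca template from \cref{sec:Motzkin}. A direct calculation from \eqref{eq:{5.1}} shows that $\mathcal{A}(x) := \sum_{n \geq 0} a_n x^n$ satisfies the inhomogeneous ODE
\[L[\mathcal{A}] := \bigl[(1 - 11x - x^2)(x\partial_x)^2 - (11x + 2x^2)(x\partial_x) - (3x + x^2)\bigr]\,\mathcal{A} = (a_1 - 3 a_0)\,x,\]
so $\mathcal{B}(x) := \sum B_n x^n$ is the distinguished power-series solution of $L[Y] = x$ and $\sum A_n x^n$ spans the power-series kernel of $L$. Since the indicial equation of $L$ at $x = 0$ has $0$ as a double root, Frobenius theory supplies a second solution of $L[Y] = 0$ with a $\log x$ component, and variation of parameters against it yields an Apéry-type expression of the form $B_n = A_n\,\Sigma_n + R_n$, with $R_n \in \mathbb{Z}$ and $\Sigma_n = \sum_{k=1}^n c_{n,k}/k$ for integers $c_{n,k}$ inherited from the algebraic part of the ODE. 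Evaluating at a prime $n = p$ isolates one summand with denominator exactly divisible by $p$, so $v_p(B_p) < 0$ follows as soon as $c_{p,p} \not\equiv 0 \pmod{p}$.

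The main obstacle is precisely this non-vanishing modulo $p$ for all sufficiently large primes $p$. It is the exact analogue of the delicate step in Klazar and Luca's treatment of Motzkin-type sequences, where the corresponding non-vanishing was verified by direct analysis of a closed-form binomial sum. In our setting the analogous residue is naturally expressed in terms of the small Apéry numbers $A_n$ modulo $p$, and I would reduce it via the Lucas-type congruences satisfied by $(A_n)_n$ (Gessel's congruence and its sharpenings) to a short closed expression whose nonvanishing can be checked by hand. Granting this step, $P(\mathrm{den}(B_p)) = p$ for all large primes $p$, and the theorem follows.
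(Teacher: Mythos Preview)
Your ``if'' direction is identical to the paper's. For the converse, however, your route diverges sharply from the paper's and leaves the central step unproven. You propose to adapt the Klazar--Luca template: build the inhomogeneous ODE (your computation of $L$ is correct), invoke Frobenius theory for a logarithmic second solution, use variation of parameters to write $B_n = A_n\Sigma_n + R_n$, and then argue $v_p(B_p) < 0$ from a non-vanishing residue $c_{p,p} \not\equiv 0 \pmod p$. But you explicitly grant this last step rather than prove it, and it is exactly the heart of the matter: the analogous step in \cite{KL05} required a delicate closed-form analysis, and here the combinatorics of the second solution of the Ap\'ery ODE are genuinely more involved than in the Motzkin case (the relevant partial sums carry $1/m^2$ rather than $1/m$, and the Lucas-type congruences for $A_n$ do not by themselves control the residue you need). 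As written, this is a plausible strategy, not a proof.

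The paper sidesteps all of this with two short, self-contained arguments that avoid $p$-adic analysis entirely. The first applies a theorem of Chang \cite{C84}: if \eqref{eq:{5.4}} had two independent integer solutions, then $|v_n| = (n-1)^2/n^2$ would have to equal $1$ for all large $n$, which is absurd. The second, modelled on Andr\'e-Jeannin \cite{AJ91}, shows directly that $\lambda_n = b_n/a_n$ is strictly increasing and then extracts a sign contradiction from the recurrence. Both are a few lines long and require nothing beyond elementary manipulations of the recurrence; either would replace the entire second half of your proposal.
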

\begin{proof}
Let us first show that $a(1,3)_n \in \mathbb{Z}$ for all $n \geq 0$. Obviously, this would imply that $a(\alpha,3\alpha)_n \in \mathbb{Z}$ for all $n \geq 0, \alpha \in \mathbb{Z}$. \\
The sequence $(\tilde{a}_n)_n$ defined by $\tilde{a}_n = \sum\limits_{k=0}^{n} \binom{n}{k}^2 \binom{n+k}{k}$ is a solution to \eqref{eq:{5.1}} (a result due to Apéry). While coming up with such a relation is nontrivial, its verification is straightforward with creative telescoping techniques \cite{zeilberger1991CT}. In Maple this can be done by calling \textit{Zeilberger} function from the \textit{SumTools/Hypergeometric} package, which applies Zeilberger's algorithm to find a recurrence satisfied by a given binomial sum.
\begin{verbatim}
> A := binomial(n, k)^2 * binomial(n + k, k):
> simplify(SumTools[Hypergeometric][Zeilberger](A, n, k, a_n)[1]);
\end{verbatim}
\[(n + 2)^2 a_n^2 + (-11n^2 - 33n - 25)a_n - (n + 1)^2,\]
where the powers of $a_n$ correspond to the coefficient shift, i.e. the expression above is equivalent to \[(n+2)^2a_{n+2} -(11(n+2)^2-11(n+2)+3)a_{n+1}-(n+1)^2a_n=0, \quad n \geq 0\]
which is simply a shift of \eqref{eq:{5.1}}.

In addition, every $\tilde{a}_n$ is a sum of binomials, hence an integer. We check the first two values:
\begin{align*}
\tilde{a}_0 = \binom{0}{0}^2 \binom{0}{0} = 1, \\
\tilde{a}_1 = \binom{1}{0}^2 \binom{1}{0} + \binom{1}{1}^2 \binom{2}{1} = 1+2 = 3.
\end{align*}
Since any solution of \eqref{eq:{5.1}} is uniquely defined by a pair of initial values ($a_0,a_1$), we deduce that $(a(1,3)_n)_n = (\tilde{a}_n)_n$. Thus, $(a(1,3)_n)_n$ is indeed integral.
\end{proof}

We now present two proofs that establish the uniqueness of this integral solution.

\subsection{Proof 1}
For every $n \geq 2$ 
\begin{equation}\label{eq:{5.3}}\tag{5.3}
a_n = \frac{(11n^2-11n+3)}{n^2}a_{n-1} + \frac{(n-1)^2}{n^2}a_{n-2}.
\end{equation}
Consider two sequences $(u_n)_n$ and $(v_n)_n$ defined by
\[u_n = \frac{(11n^2-11n+3)}{n^2}, \quad v_n = \frac{(n-1)^2}{n^2}\]
for all $n \geq 0$. Then \eqref{eq:{5.3}} becomes
\begin{equation}\label{eq:{5.4}}\tag{5.4}
a_n = u_na_{n-1} + v_na_{n-2}.
\end{equation}

The following result by Chang \cite{C84} proves to be remarkably useful in our setting.

\begin{thm}\label{thm:7.2}(\cite{C84}): Suppose that
\begin{enumerate}
    \item[a)]For all integers $n>2$, $v_n \neq 0$.
    \item[b)] The limit $\lim_{n\rightarrow\infty}\prod_{k=2}^{n}|v_k|$ exists.
    \item[c)] The recurrence relation \eqref{eq:{5.4}} has two linearly independent integer solutions.
\end{enumerate}  
Then $|v_n|=1$ for all large n.
\end{thm}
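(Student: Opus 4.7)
The strategy is to track the Casoratian (discrete Wronskian) of two linearly independent integer solutions and exploit that it is both an integer and satisfies a product formula in the $v_k$'s.

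Let $(a_n)_n$ and $(b_n)_n$ be the two linearly independent integer solutions of \eqref{eq:{5.4}} provided by hypothesis (c). Define
\[
W_n \;=\; a_n b_{n-1} - a_{n-1} b_n, \qquad n \ge 1.
\]
A direct computation, using the recurrence on each of $(a_n)$ and $(b_n)$, gives
\[
W_n \;=\; (u_n a_{n-1} + v_n a_{n-2})b_{n-1} - a_{n-1}(u_n b_{n-1} + v_n b_{n-2}) \;=\; -v_n W_{n-1},
\]
and hence by induction
\[
|W_n| \;=\; |W_1|\prod_{k=2}^{n}|v_k|.
\]

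Next I would argue that $W_n$ is a nonzero integer for all sufficiently large $n$. Integrality is clear since $(a_n)$ and $(b_n)$ are integer sequences. For nonvanishing: linear independence forces $W_1 \ne 0$ (otherwise $(a_0,a_1)$ and $(b_0,b_1)$ are proportional, and forward iteration of the recurrence then makes the two sequences proportional, a contradiction). Together with hypothesis (a), which guarantees $v_k \ne 0$ for every $k \ge 3$, the identity $|W_n| = |W_1|\prod_{k=2}^n |v_k|$ shows that $W_n \ne 0$ at least for $n \ge 3$ (and for all $n$ if additionally $v_2 \ne 0$).

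The heart of the argument is then an integrality-convergence dichotomy. By (b), the sequence $|W_n| = |W_1|\prod_{k=2}^n |v_k|$ converges to some limit $L \in \mathbb{R}_{\ge 0}$. For $n$ large, $|W_n|$ is a \emph{positive integer}, so the lower bound $|W_n| \ge 1$ forces $L \ge 1 > 0$. But a convergent sequence of positive integers must be eventually constant: there exists $n_0$ and $N \in \mathbb{N}^*$ such that $|W_n| = N$ for all $n \ge n_0$. Combining this with $|W_n| = |v_n|\,|W_{n-1}|$ yields
\[
|v_n| \;=\; \frac{|W_n|}{|W_{n-1}|} \;=\; \frac{N}{N} \;=\; 1
\]
for all $n > n_0$, which is the desired conclusion.

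The only slightly delicate points are justifying $W_1 \ne 0$ from linear independence and observing that a convergent sequence of positive integers is eventually constant; the rest is a routine Casoratian computation. I expect no substantial obstacle beyond packaging these ingredients in the right order.
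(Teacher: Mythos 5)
The paper does not prove this statement at all---it is quoted from Chang \cite{C84} and then merely applied---so there is no internal proof to compare with; your proposal has to stand on its own, and in essence it does. The Casoratian argument you give is the standard (and correct) route: $W_n=-v_nW_{n-1}$ gives $|W_n|=|W_1|\prod_{k=2}^{n}|v_k|$; $W_1\neq0$ because dependent initial vectors would propagate, via the recurrence (whose leading coefficient is $1$), to dependent sequences; $W_n\in\mathbb{Z}$; and a convergent sequence of integers of absolute value at least $1$ is eventually constant, forcing $|v_n|=|W_n|/|W_{n-1}|=1$ for all large $n$.

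One slip, though: your parenthetical about nonvanishing is backwards. Hypothesis (a) as transcribed only gives $v_k\neq0$ for $k>2$, while the product defining $|W_n|$ starts at $k=2$. If $v_2=0$, then $W_n=0$ for every $n\ge2$ and the whole argument collapses; the nonvanishing of $v_k$ for $k\ge3$ cannot rescue it, contrary to your claim that $W_n\neq0$ ``at least for $n\ge3$''. In fact the statement as printed fails in that edge case: take $v_2=0$, $u_n=1$ for $n\ge2$, $v_n=2$ for $n\ge3$; then $(1,0,0,0,\dots)$ and $(0,1,1,3,\dots)$ are two linearly independent integer solutions, $\prod_{k=2}^{n}|v_k|\equiv0$ so the limit exists, yet $|v_n|=2$ for all large $n$. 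So the defect lies in the transcription of Chang's hypotheses (one needs $v_k\neq0$ for every $k$ occurring in the product, i.e.\ $k\ge2$), not in your method; under that intended reading---which holds in the paper's application, where $v_2=\tfrac14$---your proof is complete.
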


\cref{thm:7.2} is naturally applicable to certain 2-order P-recursive sequences. In particular, it can be used to prove the uniqueness of an integer solution should one exist. The following reasoning mimics the one for big Apéry numbers in \cite{C84}, presented as an application of \cref{thm:7.1} in \cite{C84}. 

Let us prove the $\implies$ direction of \cref{thm:7.1} by contradiction. Assume there exist $\mu,\lambda \in \mathbb{Z}$ with $\lambda \neq 3\mu$ such that $a(\mu,\lambda)_n \in \mathbb{Z}$ for all $n \in \mathbb{N}$. Then all the conditions of \cref{thm:7.2} are satisfied. Indeed, for all $k>2$ 
\[v_k = \frac{(k-1)^2}{k^2} \neq 0\]
and
\[\lim_{n\rightarrow\infty}\prod_{k=2}^{n}|v_k| = \lim_{n\rightarrow\infty}\prod_{k=2}^{n}\frac{(k-1)^2}{k^2} = \lim_{n\rightarrow\infty}\frac{1}{n^2} = 0.\]
By \cref{thm:7.2}, $|v_n|$ must be equal to 1 for all $n > N$ for some $N \in \mathbb{N}$. This is clearly not the case since $v_n = \frac{(n-1)^2}{n^2}$, hence a contradiction. Therefore, \eqref{eq:{5.1}} has at most 1 integral solution, and we have already seen that it is given by $(a(1,3)_n)_n$.
\subsection{Proof 2}
This proof is largely inspired by the work of André-Jeannin \cite{AJ91} on the integrality of big Apéry numbers.

Again, we argue by contradiction. Assume there exist $\mu,\lambda \in \mathbb{Z}$ with $\lambda \neq 3\mu$ such that $a(\mu,\lambda)_n \in \mathbb{Z}$ for all $n \in \mathbb{N}$. By the linearity of \eqref{eq:{5.1}} this implies the integrality of the sequence 
\[(a(0,\lambda-3\mu)_n)_n = (a(\mu,\lambda)_n)_n - \mu(a(1,3)_n)_n.\]
In other words, there exists an integer $c \neq 0$ such that for all$n \in \mathbb{N}\quad a(0,c)_n \in \mathbb{Z}$. To avoid notational ambiguity, we introduce a new sequence $(b_n)_n = (a(0,c)_n)_n$, and write $(a_n)_n = (a(1,3)_n)_n$ for the sequence of small Apéry numbers.\\
Recurrence \eqref{eq:{5.3}} for $(b_n)_n$ can be rewritten as
\begin{equation}\label{eq:{5.5}}\tag{5.5}
b_n - b_{n-1} = \frac{10n^2-11n+3}{n^2}b_{n-1} + \frac{(n-1)^2}{n^2}b_{n-2} = \frac{10(n-\frac{3}{5})(n-\frac{1}{2})}{n^2}b_{n-1} + \frac{(n-1)^2}{n^2}b_{n-2}.
\end{equation}
Now, 
\[\frac{10(n-\frac{3}{5})(n-\frac{1}{2})}{n^2} > 0, \frac{(n-1)^2}{n^2}>0 \ \text{ for } n \geq 2 \text{ and } b_1>b_0=0,\]
so that the RHS of \eqref{eq:{5.5}} is strictly positive for all $n \geq 2$. Consequently, the sequence $(b_n)_n$ is increasing, and so is $(a_n)_n$ by the exact same reasoning. Moreover, $a_n > 0$ and $b_n > 0$ for $n \geq 1$.

We compute $b_2 = \frac{25}{4}b_1$. Therefore, $b_1 = \frac{b_1}{a_1} < \frac{b_2}{a_2} = \frac{25}{12}b_1$. The next step is to prove
\begin{equation}\label{eq:{5.6}}\tag{5.6}
\frac{b_{n-1}}{a_{n-1}} < \frac{b_n}{a_n}.
\end{equation}
for all $n>2$.

Equivalently to \eqref{eq:{5.1}},
\begin{equation}\label{eq:{5.7}}\tag{5.7}
(11n^2 - 11n + 3)a_{n-1} = n^2 a_n - (n-1)^2 a_{n-2}.
\end{equation}

Let $\lambda_i = \frac{b_i}{a_i}$, $i \geq 1$. If $\lambda_{n-2} < \lambda_n$, then from \eqref{eq:{5.7}} we have
\[
(11n^2 - 11n + 3) \lambda_{n-1} a_{n-1} = n^2 \lambda_n a_n - (n-1)^2 \lambda_{n-2} a_{n-2}
< \lambda_n (n^2 a_n - (n-1)^2 a_{n-2}),
\]
and
\[
(11n^2 - 11n + 3) \lambda_{n-1} a_{n-1} > \lambda_{n-2} (n^3 a_n - (n-1)^3 a_{n-2}).
\]
Hence,
\[
\lambda_{n-2} < \lambda_n \implies \lambda_{n-2} < \lambda_{n-1} < \lambda_n.
\]
Similarly,
\[
\lambda_{n-2} \geq \lambda_n \implies \lambda_{n-2} \geq \lambda_{n-1} \geq \lambda_n.
\]

Therefore, $\lambda_{n-2} < \lambda_{n-1} \implies \lambda_{n-1} < \lambda_n$.  
Since \eqref{eq:{5.6}} holds for $n = 2$, it also holds for all $n \geq 2$. 

Substituting $n+1$ in \eqref{eq:{5.7}} and dividing by the same equation written for $(b_n)_n$ yields
\[
\frac{b_n}{a_n} = \frac{(n+1)^2 b_{n+1} - n^2 b_{n-1}}{(n+1)^2 a_{n+1} - n^2 a_{n-1}}.
\]

which up to rearrangement of terms is equivalent to
\begin{equation}\label{eq:{5.8}}\tag{5.8}
(2n + 1) \left( \frac{b_{n+1}}{a_{n+1}} - \frac{b_n}{a_n} \right) a_n a_{n+1}
= n^2 ((a_n b_{n-1} - b_n a_{n-1}) -  (a_n b_{n+1} - b_n a_{n+1})).
\end{equation}

The LHS of \eqref{eq:{5.8}} is positive by \eqref{eq:{5.6}}. Hence, $a_n b_{n+1} - b_n a_{n+1} < a_n b_{n-1} - b_n a_{n-1}$ for all $n \geq 2$.
We deduce from \eqref{eq:{5.6}} that $a_n b_{n-1} - b_n a_{n-1} < 0$ for all $n \geq 2$. Thus,
\begin{equation}\label{eq:{5.9}}\tag{5.9}
a_n b_{n+1} - b_n a_{n+1} < 0 
\end{equation}
for all $n \geq 2$.
But \eqref{eq:{5.6}} directly implies
\begin{equation}\label{eq:{5.10}}\tag{5.10}
a_n b_{n+1} - b_n a_{n+1} > 0
\end{equation}
for all $n \geq 2$. 
The proof follows by observing that \eqref{eq:{5.9}} and \eqref{eq:{5.10}} are incompatible.

\section{Concluding remarks} 
The story, however, is far from complete, as several questions remain open. The case of $b_2 \neq \frac{2a_2b_1-a_1a_2b_0}{a_1}$ is still poorly understood, although we believe it leads to not globally bounded solutions. Moreover, our algorithm is currently not applicable to the recurrences with $b_0 \notin \mathbb{N}$. Beyond that, higher-order recurrences or those with nonlinear polynomial coefficients present further challenges. Their solution spaces are more complex, and the connection between global boundedness and algebraicity becomes harder to characterise.

Although our main goal was to decide algebraicity and global boundedness, the original question of integrality motivated much of this work. By revealing a structural connection between these notions for second-order recurrences, we were able to approach parts of the integrality problem via concepts from differential algebra. This perspective not only simplifies the analysis, but also suggests a path toward a more systematic, algorithmic treatment of arithmetic properties in holonomic sequences.

\bibliographystyle{alpha}
\bibliography{references}
\end{document}